\numberwithin{equation}{section}
\numberwithin{figure}{section}
\theoremstyle{plain}
\newtheorem{theorem}{Theorem}[section]
\theoremstyle{plain}
\newtheorem*{theorem*}{Theorem}
\theoremstyle{plain}
\newtheorem{proposition}[theorem]{Proposition}
\theoremstyle{plain}
\newtheorem{lemma}[theorem]{Lemma}
\theoremstyle{plain}
\newtheorem{corollary}[theorem]{Corollary}
\theoremstyle{definition}
\newtheorem{definition}[theorem]{Definition}
\theoremstyle{definition}
\theoremstyle{definition}
\newtheorem{example}[theorem]{Example}
\theoremstyle{remark}
\newtheorem{remark}[theorem]{Remark}
\theoremstyle{definition}
\newtheorem{question}[theorem]{Question}
\newcommand{\Uqg}{U_q(\mathfrak{g})}
\newcommand{\OqG}{\mathcal{O}_q(G)}
\newcommand{\CqG}{C^*_q(G)}
\newcommand{\bbN}{\mathbb{N}}
\newcommand{\lieg}{\mathfrak{g}}
\newcommand{\calB}{\mathcal{B}}
\newcommand{\calO}{\mathcal{O}}
\newcommand{\wt}{\mathrm{wt}}
\newcommand{\sfR}{\mathsf{R}}
\newcommand{\dwei}{\mathsf{P}^+}
\newcommand{\hrg}{\Gamma_{\mathfrak{g}}}
\newcommand{\wgc}{\mathsf{s}}
\newcommand{\Rend}{\sfR}
\newcommand{\bruhat}{B(W)}
\newcommand{\bruhatL}{B_L(W)}
\newcommand{\bruhatR}{B_R(W)}
\newcommand{\coloring}{\mathsf{c}}
\newcommand{\suppr}{\mathrm{supp}_{\mathrm{r}}}
\newcommand{\suppw}{\mathrm{supp}_{\mathrm{w}}}
\newcommand{\lkey}{K_-}
\newcommand{\rkey}{K_+}
\begin{document}

\title[Results on the higher-rank graphs associated to semisimple Lie algebras]{Some results on the higher-rank graphs associated \\ to crystals of semisimple Lie algebras}

\author{Marco Matassa}

\address{Department of Computer Science, OsloMet – Oslo Metropolitan University, Oslo, Norway}

\email{marco.matassa@oslomet.no}

\begin{abstract}
In this paper we continue the study of the higher-rank graphs associated to finite-dimensional complex semisimple Lie algebras, introduced by the author and R. Yuncken, whose construction relies on Kashiwara's theory of crystals.
First we prove that the Bruhat graphs of the corresponding Weyl groups, both weak and strong, can be embedded into the higher-rank graphs as colored graphs.
Next, specializing to Lie algebras of type $A$, we connect some aspects of the construction of the higher-rank graphs with some well-known notions in combinatorics, most notably the keys of Lascoux and Schützenberger.
\end{abstract}

\maketitle

\section{Introduction}

\subsection{Background}

The notion of \emph{crystals} associated to a complex semisimple Lie algebra $\lieg$, originally due to Kashiwara \cite{kashiwara1, kashiwara2}, is by now an established tool in representation theory and combinatorics (see for instance \cite{hong-kang, bump-schilling} for textbook accounts).
For each $\lieg$-module $V$, it allows the construction of a corresponding crystal $\calB$, which is a combinatorial object that encodes all the essential information about such a module.
One way to construct such $\lieg$-crystals is via the \emph{quantized enveloping algebra} $\Uqg$, where they arise from its modules in a certain limit $q \to 0$ of the deformation parameter.
In particular, this theory can be applied to produce a crystal analogue of the \emph{quantized coordinate ring} $\OqG$ of the corresponding Lie group $G$, which has been investigated from this perspective in \cite{kashiwara-weyl}.

A different approach to the construction of a $q \to 0$ limit for the quantized coordinate ring $\OqG$ was proposed in \cite{matassa-yuncken}.
It is also based on the category of $\lieg$-crystals, but keeps track of the $*$-structure corresponding to the compact real form of $\lieg$, which is necessary for the analytic picture in terms of $C^*$-algebras.
One of the main results there was the connection with the notion of \emph{higher-rank graphs}, introduced by Kumjian and Pask in \cite{kumjian-pask}.
These allow the construction of $C^*$-algebras which generalize the \emph{graph $C^*$-algebras} corresponding to directed graphs.
We review various results related to this perspective below.

Let us write $\CqG$ for the $C^*$-completion of $\OqG$ with deformation parameter $q \in (0, 1)$.
It has been known since \cite{woronowicz} that the $C^*$-algebras $C^*_q(SU(2))$ are isomorphic for all values of $q \in [0, 1)$ (where the case $q = 0$ has to be interpreted appropriately).
It was observed in \cite{hong-szymanski} that these can be described as graph $C^*$-algebras, where the underlying graph structure is most transparent in the case $q = 0$.
However, it was also observed that $\CqG$ does not admit a description of this type for (simple) $G$ of rank larger than one, because of known restrictions imposed by the so-called gauge action.
On the other hand, the $C^*$-algebras corresponding to higher-rank graphs do not have such restrictions, and hence are natural candidates for the general description of $\CqG$.
This perspective was first explored by Giselsson for $G = SU(3)$ in \cite{giselsson} (and appeared later in print), where it was shown that $C^*_q(SU(3))$ can be modeled as a higher-rank graph $C^*$-algebra of rank two. Furthermore, such $C^*$-algebras are isomorphic for all $q \in [0, 1)$, as in the case of $SU(2)$.
Based on this insight, a general construction was given in \cite{matassa-yuncken}, which can be applied to any Lie group $G$ corresponding to a finite-dimensional complex semisimple Lie algebra $\lieg$: upon representing $\OqG$ on a Hilbert space by a faithful $*$-representation, it produces a $*$-algebra $\calO_0(G)$ as a certain $q \to 0$ limit of $\OqG$, whose $C^*$-completion $C^*_0(G)$ was shown to be isomorphic to a concretely defined higher-rank graph $C^*$-algebra.
It is conjectured that $C^*_0(G)$ should be isomorphic to $\CqG$ for any $q \in (0, 1)$, but this is not known in general (on the other hand, it is known that the $C^*$-algebras $\CqG$ are isomorphic for all $q \in (0, 1)$ by \cite{q-independence}).

Informally, we can think of a higher-rank graph as a generalization of a directed colored graph, with vertices connected by paths and with rank given by the number of colors (for its precise definition see \cref{def:higher-rank}).
As shown in \cite{matassa-yuncken}, to any finite-dimensional complex semisimple Lie algebra $\lieg$ we can associate a higher-rank graph $\hrg$ with rank equal to that of $\lieg$, and whose construction makes use of the corresponding category of $\lieg$-crystals.
The following general properties of the higher-rank graphs $\hrg$ are proven in the cited paper:
\begin{itemize}
\item the \emph{Weyl group} $W$ of $\lieg$ embeds into the vertex set of $\hrg$,
\item this set admits a \emph{partial order} with unique maximal and minimal elements, corresponding to the Weyl group elements of maximal and minimal lengths,
\item the paths in $\hrg$ are \emph{compatible} with this partial order.
\end{itemize}
On the other hand, not much is known about which paths arise in $\hrg$, even in the case of vertices corresponding to Weyl group elements.
Recall that the \emph{Bruhat graph} of $W$ is a certain canonical directed graph attached to $W$, having its elements as vertices.
Since $W$ embeds into the vertex set of $\hrg$, a natural question arises: what is the relation between the Bruhat graph of $W$ and the paths in $\hrg$ between vertices corresponding to $W$?
This provides the main motivation for this paper, which aims to elucidate this structure.

\subsection{Results}

The first aspect we investigate is whether we can embed the various Bruhat graphs associated to $W$ into the higher-rank graph $\hrg$.
We begin by considering the graphs corresponding to the right and left \emph{weak} Bruhat orders on $W$, which can be seen as colored graphs in a natural way.
In \cref{thm:embedding-rightweak} we show that the right weak Bruhat graph has a \emph{unique} embedding, as a colored graph, into the \emph{skeleton} of $\hrg$ (where the skeleton is a colored graph that can be canonically attached to any higher-rank graph).
The situation is different for the left weak Bruhat graph, which does not admit such an embedding in general (however we note that the difference of left versus right is largely a matter of conventions).
On the other hand, in \cref{thm:embedding-bruhat} we prove that we can embed the (strong) Bruhat graph into the higher-rank graph $\hrg$, rather than just its skeleton.
This is true for any choice of \emph{compatible coloring} for the Bruhat graph, as described in \cref{def:compatible-coloring}, which shows that many such embeddings are possible.
We illustrate and discuss these results for $\lieg = A_2 = \mathfrak{sl}_3$.

Next, we investigate some specific aspects of the construction for $\lieg = A_r$, with the goal of making connections with some well-known notions in combinatorics.
One important ingredient in the construction of $\hrg$ is the notion of \emph{right ends} of elements of a crystal, introduced in \cite{matassa-yuncken}.
For any $\lieg$, the vertex set of $\hrg$ is constructed as the right ends of the elements of the crystal $\calB(\rho)$ corresponding to the regular weight $\rho = \omega_1 + \cdots + \omega_r$.
For $\lieg = A_r$, it is well-known that elements of crystals can be put in one-to-one correspondence with semistandard Young tableaux (a result originally due to Kashiwara-Nakashima \cite{kashiwara-nakashima}).
In light of this identification, taking the right ends of crystal elements should correspond to some operation on such tableaux. In \cref{thm:keys-ends} it is shown that the right ends coincide with the (left) \emph{keys} of Lascoux and Schützenberger \cite{keys}.
Along the way, we also discuss the relation between the notion of \emph{crystal braiding} and the jeu de taquin slides for skew tableaux.

Finally we offer some brief discussion on the (tentative) connection between right ends and a \emph{generalization of keys} introduced in \cite[Definition 5.2]{lenart-involution}.
The latter notion can be defined for a Lie algebra $\lieg$ of arbitrary type, and reduces in type $A$ to the one introduced by Lascoux and Schützenberger in an appropriate sense.
Such a key can be defined as a map from a crystal to the Weyl group of $\lieg$, and can be characterized by some axioms given in \cite[Section 3]{hersh-lenart}.
This description makes it clear that the right ends \emph{do not} coincide with such keys in general, since not all right ends can be put in correspondence with Weyl group elements.
However we expect that such keys should coincide with the right ends for many elements of the crystal $\calB(\rho)$, and we discuss this explicitly in the case of $\lieg = C_2$.

\subsection{Organization}

The paper is organized as follows.
In \cref{sec:preliminaries} we recall some basic facts concerning crystals and the Bruhat order.
In \cref{sec:higher-rank} we review the construction of the higher-rank graphs $\hrg$ from \cite{matassa-yuncken}, highlighting the main aspects relevant for this paper.
In \cref{sec:technical-results} we prove the main technical results needed for the embeddings of the Bruhat graphs into $\hrg$.
These are discussed in \cref{sec:embedding}, where we distinguish between the right weak Bruhat order, the left weak Bruhat order and the (strong) Bruhat order.
We also comment on these results and illustrate them in the case of $\lieg = A_2$.
In \cref{sec:right-ends} we specialize to $\lieg = A_r$ and reinterpret various aspects of the construction of $\hrg$.
In particular, we show how the crystal braiding can be computed in terms of jeu de taquin slides, and how the right ends can be be interpreted as left keys.
Finally in \cref{sec:beyond-A} we briefly discuss the (tentative) connection with the mentioned generalization of keys, illustrating this for $\lieg = C_2$.

\subsection*{Acknowledgments}

It is a pleasure to thank Robert Yuncken for the great collaboration in \cite{matassa-yuncken}, as well as for interesting discussions related to the contents of this paper.
We would also like to thank Makoto Yamashita for a stimulating conversation, that sparked the author's interest into proving some of the results discussed here.

\section{Some preliminaries}
\label{sec:preliminaries}

In this section we review some standard material concerning crystals associated to a finite-dimensional complex semisimple Lie algebra $\lieg$, and the Bruhat order on the corresponding Weyl group $W$.
In particular, we emphasize the action of the Weyl group on crystals.

\subsection{Crystals}

We begin by fixing some standard notations for a finite-dimensional complex semisimple Lie algebra $\lieg$.
We write $I = \{ 1, \ldots, r \}$ for the index set corresponding to the rank $r$.
The simple roots and fundamental weights are denoted by $\{ \alpha_i \}_{i \in I}$ and $\{ \omega_i \}_{i \in I}$, respectively.
We write $(\cdot, \cdot)$ for the usual invariant bilinear form on the dual of the Cartan subalgebra.
The coroots are denoted by $\alpha^\vee = 2 \alpha / (\alpha, \alpha)$ and we recall the relation $(\omega_i, \alpha_j^\vee) = \delta_{i j}$.
We write $\dwei$ for the dominant weights and $\preceq$ for the usual partial order on weights (meaning that $\mu \prec \lambda$ if $\lambda - \mu$ is an element of the positive root lattice).

For the precise definition of a $\lieg$-crystal we refer to \cite[Section 4.5]{hong-kang} (all the crystals we consider here are \emph{semiregular}), while below we only highlight the most relevant aspects for our purposes.
As part of the definition of a crystal $\calB$, we have maps $\tilde{E}_i, \tilde{F}_i : \calB \to \calB \sqcup \{ 0 \}$ for any $i \in I$, called the \emph{Kashiwara operators}.
These can be seen as a renormalization of the action of the root vectors $E_i, F_i$ for the simple root $\alpha_i$ (see \cite[Section 4.1]{hong-kang} for a precise definition).
Corresponding to these operators, we have the maps $\varepsilon_i, \varphi_i : \calB \to \bbN$ given by
\[
\varepsilon_i(b) = \max \{ k: \tilde{E}_i^k b \in \calB \}, \quad
\varphi_i(b) = \max \{ k: \tilde{F}_i^k b \in \calB \}.
\]
They satisfy the relation $\varphi_i(b) - \varepsilon_i(b) = (\wt(b), \alpha_i^\vee)$, where $\wt(b)$ is the weight of $b \in \calB$.

The \emph{crystal graph} corresponding to $\calB$ is the graph with vertices given by the elements of $\calB$ and with directed edges corresponding to the action of the Kashiwara operators $\{ \tilde{F}_i \}_{i \in I}$.
Given $\lambda \in \dwei$, we write $\calB(\lambda)$ for the unique (up to isomorphism) crystal corresponding to the highest weight $\lambda$.
In this case we use the terminology \emph{connected}, since the corresponding crystal graph turns out to be connected (another standard terminology would be irreducible).

One of the most notable feature of crystals is their tensor product structure.
Given two crystals $\calB$ and $\calB'$, we can form their tensor product $\calB \otimes \calB'$ (which as a set is simply $\calB \times \calB'$). The Kashiwara operators act as follows (with conventions as in \cite{hong-kang}):
\begin{equation}
\label{eq:tensor-rule}
\tilde{E}_i (b \otimes b') = \begin{cases}
\tilde{E}_i b \otimes b' & \varphi_i(b) \geq \varepsilon_i(b') \\
b \otimes \tilde{E}_i b' & \varphi_i(b) < \varepsilon_i(b')
\end{cases}
, \quad
\tilde{F}_i (b \otimes b') = \begin{cases}
\tilde{F}_i b \otimes b' & \varphi_i(b) > \varepsilon_i(b') \\
b \otimes \tilde{F}_i b' & \varphi_i(b) \leq \varepsilon_i(b')
\end{cases}.
\end{equation}
(Let us note that these are the same conventions used in the papers by Kashiwara, but opposite to those used in the book \cite{bump-schilling}, which is going to be relevant later on).

The crystal graph of the tensor product $\calB(\lambda) \otimes \calB(\lambda')$ can be decomposed into connected components, which turn out to correspond to the simple components in the analogous decomposition for the classical (or quantum) modules $V(\lambda) \otimes V(\lambda')$.

There is an action of the Weyl group $W$ on $\lieg$-crystals, which was originally defined in \cite{kashiwara-weyl} (see also \cite[Section 11.3]{bump-schilling}).
Given an element $b \in \calB$ of weight $\wt(b)$, we have
\[
\wgc_i b := \begin{cases}
\tilde{F}_i^k b & k > 0 \\
b & k = 0 \\
\tilde{E}_i^k b & k < 0
\end{cases},
\quad
k = (\wt(b), \alpha_i^\vee).
\]
We note that $\wt(\wgc_i b) = \wt(b) - k \alpha_i^\vee$, which means that $\wt(\wgc_i b) = s_i \wt(b)$, where on the right-hand side we have the ordinary Weyl group action on weights.

Given a dominant weight $\lambda \in \dwei$, the weights in the orbit of $\lambda$ under the Weyl group $W$ are called \emph{extremal}.
In the crystal $\calB(\lambda)$ there is a unique element of weight $w \lambda$ for $w \in W$, which we can then denote by $b_{w \lambda}$.
It follows from the discussion above that $\wgc_i b_{w \lambda} = b_{s_i w \lambda}$.

\subsection{Bruhat order}

We refer to \cite{bjorner-brenti} for any unexplained notions and results.
Let $W$ be a Weyl group with a fixed set of generators $S$, also called the \emph{simple reflections}.
The length function on $W$ is denoted by $\ell$.
We denote by $T := \{ w s w^{-1} : w \in W, \ s \in S \}$ the set of \emph{reflections}.
We recall that the elements of $T$ can be put in one-to-one correspondence with the positive roots of the Lie algebra associated with $W$, and that under this identification the simple reflections correspond to the simple roots, see \cite[Proposition 4.4.5]{bjorner-brenti}.

Let us now recall the notions of Bruhat order and Bruhat graph on $W$.

\begin{definition}
The \emph{Bruhat graph} $\bruhat$ corresponding to $W$ is defined as follows:
\begin{itemize}
\item the vertices are the elements of $W$,
\item we have an edge $u \to w$ if $w = u t$ with $t \in T$ and $\ell(w) > \ell(u)$.
\end{itemize}
We write $w > u$ if there exist elements $w_1, \ldots, w_n$ such that $u \to w_1 \to \cdots \to w_n \to w$.
This is called the (strong) \emph{Bruhat order} on $W$.
\end{definition}

\begin{remark}
The edges in the Bruhat graph can be equivalently defined by the condition $w = t u$ with $t \in T$ and $\ell(u) < \ell(w)$ (that is, by putting the reflection $t$ on the left). This gives the same graph, since we have $u t = (u t u^{-1}) u$ and $u t u^{-1} \in T$.
\end{remark}

The right and left weak versions are defined similarly, but only using simple reflections.

\begin{definition}
The \emph{right weak Bruhat graph} $\bruhatR$ is defined as follows:
\begin{itemize}
\item the vertices are the elements of $W$,
\item we have an edge $u \to w$ if $w = u s_i$ with $s_i \in S$ and $\ell(w) > \ell(u)$.
\end{itemize}
The \emph{left weak Bruhat graph} $\bruhatR$ is defined similarly, with the edges defined by the condition $u \to w$ if $w = s_i u $ with $s_i \in S$ and $\ell(w) > \ell(u)$.
\end{definition}

The right and left weak Bruhat orders are different partial orders on $W$, in general. However they are isomorphic to each other via the map $w \mapsto w^{-1}$.

\section{Higher-rank graph for Lie algebras}
\label{sec:higher-rank}

This section is dedicated to reviewing the construction of the higher-rank graph $\hrg$ from \cite{matassa-yuncken}, which can be defined for any finite-dimensional complex semisimple Lie algebra $\lieg$.
We begin by spelling out the definition of a higher-rank graph, as well as recalling some standard terminology associated to it.
We proceed by reviewing the notions of \emph{Cartan braiding} and \emph{right ends}, which have been introduced in \cite{matassa-yuncken}, as well as their main properties.
Finally we use these two crucial ingredients to explain the construction of $\hrg$.

\subsection{Higher-rank graphs}

The notion of higher-rank graph was introduced in \cite{kumjian-pask} to construct $C^*$-algebras generalizing the more familiar graph $C^*$-algebras.

In the following definition we regard $\bbN^k$ as a category with one object and morphisms corresponding to $k$-tuples of natural numbers, with composition given by their sum.

\begin{definition}
\label{def:higher-rank}
A \emph{higher-rank graph of rank $k$} (or $k$-graph for short) is a countable category $\Lambda$ together with a (degree) functor $d: \Lambda \to \bbN^k$ satisfying the following \emph{factorization property}: for every morphism $f \in \Lambda$ and $k$-tuples $m, n \in \bbN^k$ such that $d(f) = m + n$, there are unique morphisms $g, h \in \Lambda$ with $d(g) = m$ and $d(h) = n$ such that $f = g \circ h$.
\end{definition}

We recall some standard terminology used in this context.
A morphism $f$ with $d(f) = (m_1, \ldots, m_k)$ is also called a \emph{path of color $(m_1, \ldots, m_k)$}.
As a special case, morphisms with $d(f) = (0, \ldots, 1, \ldots, 0)$ with $1$ in position $i$ are called \emph{edges of color $i$}.
The morphisms of degree zero can be identified with the objects of $\Lambda$, and they also referred to as the \emph{vertices}.
The notations $s$ and $r$ are used to denote the source and range of a morphism, respectively.

\begin{example}
Any directed graph can be regarded as a higher-rank graph of rank $1$ in a natural way.
In this case $\Lambda$ is simply the path category corresponding to the graph, with the degree functor $d: \Lambda \to \bbN$ being given by the length of a path.
\end{example}

The \emph{skeleton} of a $k$-graph $\Lambda$ is the directed graph with $k$ colors constructed by taking all vertices and edges of any color of $\Lambda$ (but not general paths).
It is not possible to recover $\Lambda$ from the knowledge of its skeleton, in general.
This requires the extra data of a collection of commuting squares, see \cite{skeleton-graph} (although we are not going to use this structure).

\subsection{Cartan braiding}

In order to describe the higher-rank graphs $\hrg$, we begin by introducing certain morphisms between crystals.
More precisely, they are going to be morphisms defined on the Cartan components of tensor products, in the following sense.

\begin{definition}
Let $\calB(\lambda_1) \otimes \cdots \otimes \calB(\lambda_n)$ be a product of connected crystals.
Then its \emph{Cartan component} is the unique connected component isomorphic to $\calB(\lambda_1 + \cdots + \lambda_n)$.
\end{definition}

Using this terminology, we define the notion of Cartan braiding.

\begin{definition}
Let $\calB(\lambda)$ and $\calB(\lambda')$ be connected crystals.
The \emph{Cartan braiding}
\[
\sigma_{\calB(\lambda), \calB(\lambda')} : \calB(\lambda) \otimes \calB(\lambda') \to \calB(\lambda') \otimes \calB(\lambda)
\]
is the unique morphism of crystals which is an isomorphism between the Cartan components of the two tensor products and is zero otherwise. That is, for $\calB(\mu) \subseteq \calB(\lambda) \otimes \calB(\lambda')$ we have
\[
\sigma_{\calB(\lambda), \calB(\lambda')}(\calB(\mu)) =
\begin{cases}
\calB(\lambda + \lambda') & \mu = \lambda + \lambda' \\
0 & \mathrm{otherwise}
\end{cases}.
\]
\end{definition}

More generally, it can be defined for products of connected crystals.
We should warn the reader that $\sigma$ is \emph{not} a braiding in the usual sense (for instance, it is not invertible).
The choice of name is justified by the fact that it can be obtained as a limit of the rescaled braiding $q^{(\lambda, \lambda')} \hat{R}_{V(\lambda), V(\lambda')}$ of $\Uqg$-modules, see \cite[Theorem 5.3]{matassa-yuncken}.
As such, it inherits various properties from the braiding $\hat{R}_{V(\lambda), V(\lambda')}$, which are explored in the cited paper.

\begin{remark}
\label{rmk:commutor-cartan}
The category of $\lieg$-crystals is not braided, but rather admits the structure of a coboundary category.
The natural isomorphisms in this case are given by the \emph{crystal commutor} introduced in \cite{crystal-commutor}.
We observe that the Cartan braiding $\sigma_{\calB(\lambda), \calB(\lambda')}$ coincides with the crystal commutor when restricted to the Cartan component of $\calB(\lambda) \otimes \calB(\lambda')$.
One interesting aspect of the former is that it obeys the braid equation (at the cost of not being invertible), while the latter obeys the more complicated axioms of a coboundary category.
\end{remark}

The Cartan braiding does not typically coincide with the flip map, as is typically the case for crystal morphisms of this type.
However this turns to be the case for the subset of extremal elements of the tensor product, as shown in \cite[Lemma 10.1]{matassa-yuncken}.

\begin{lemma}
\label{lem:cartan-extremal}
Let $\lambda, \lambda' \in \dwei$. Then for any $w \in W$ we have
\[
\sigma_{\calB(\lambda), \calB(\lambda')} (b_{w \lambda} \otimes b_{w \lambda'}) = b_{w \lambda'} \otimes b_{w \lambda}.
\]
\end{lemma}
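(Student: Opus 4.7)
The plan is to identify both $b_{w\lambda} \otimes b_{w\lambda'}$ and $b_{w\lambda'} \otimes b_{w\lambda}$ as the \emph{unique} elements of weight $w(\lambda + \lambda')$ in the respective Cartan components of $\calB(\lambda) \otimes \calB(\lambda')$ and $\calB(\lambda') \otimes \calB(\lambda)$. Once this is established, the Cartan braiding, being by definition an isomorphism between these two Cartan components and (as a morphism of crystals) weight-preserving, must send one to the other.

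The first step is to note that $\wt(b_{w\lambda} \otimes b_{w\lambda'}) = w\lambda + w\lambda' = w(\lambda + \lambda')$, which is an extremal weight of $\calB(\lambda + \lambda')$ and hence occurs in the Cartan component with multiplicity one, realized by $b_{w(\lambda+\lambda')}$. The second, and main, step is to rule out any other occurrence of this weight in $\calB(\lambda) \otimes \calB(\lambda')$. Any non-Cartan component $\calB(\mu)$ of the decomposition has $\mu \prec \lambda + \lambda'$. By the classical characterization of weights of an irreducible highest-weight crystal, a weight $\nu$ occurs in $\calB(\mu)$ only if the dominant representative of its $W$-orbit is $\preceq \mu$. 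For $\nu = w(\lambda + \lambda')$ this representative is $\lambda + \lambda'$ itself, forcing $\lambda + \lambda' \preceq \mu$ and contradicting $\mu \prec \lambda + \lambda'$.

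Consequently $b_{w\lambda} \otimes b_{w\lambda'}$, being an element of weight $w(\lambda + \lambda')$, must lie in the Cartan component and coincide with the unique element of that weight there. Applying the same argument to $\calB(\lambda') \otimes \calB(\lambda)$ identifies $b_{w\lambda'} \otimes b_{w\lambda}$ in the analogous way. The equality $\sigma_{\calB(\lambda), \calB(\lambda')}(b_{w\lambda} \otimes b_{w\lambda'}) = b_{w\lambda'} \otimes b_{w\lambda}$ then follows directly from the defining property of the Cartan braiding.

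I do not anticipate a serious obstacle; the only slightly delicate point is the invocation of the standard fact that $\nu$ is a weight of $\calB(\mu)$ if and only if the dominant representative of $W \cdot \nu$ is $\preceq \mu$, but this is classical and recorded in the references cited in the preliminaries, such as \cite{hong-kang, bump-schilling}.
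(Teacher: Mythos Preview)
Your argument is correct. The paper itself does not prove this lemma, citing instead \cite[Lemma~10.1]{matassa-yuncken}; but the approach implicit in that reference, and reproduced in the paper's own \cref{cor:action-F-weyl} and the $w=w'$ case of \cref{prop:cartan-bruhat}, is rather different from yours. There one fixes a reduced expression $w = s_{i_1}\cdots s_{i_n}$ and shows directly from the tensor-product rule that $\wgc_{i_1}\cdots\wgc_{i_n}(b_\lambda \otimes b_{\lambda'}) = b_{w\lambda}\otimes b_{w\lambda'}$, thereby exhibiting an explicit chain of Kashiwara operators from the highest-weight element; the identity for $\sigma$ then follows by commuting the crystal isomorphism past these operators. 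Your weight-multiplicity argument bypasses this computation entirely: rather than building a path to $b_{w\lambda}\otimes b_{w\lambda'}$, you characterize it as the unique element of its (extremal) weight in the entire tensor product. This is slicker for the statement at hand, while the operator-based approach is what generalizes to the harder case $w > w'$ needed in \cref{prop:cartan-bruhat}.
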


We are going to make frequent use of this property in the following.

\subsection{Right ends}

Before describing the higher-rank graphs we need one more definition, which is that of right end of a crystal element (with respect to a dominant weight).

\begin{definition}
Let $\lambda, \mu \in \dwei$ be such that $\lambda \succeq \mu$.
The \emph{right end} of $\calB(\lambda)$ with respect to $\mu$ is the (set-theoretic) map $\Rend_\mu : \calB(\lambda) \to \calB(\mu)$ defined as follows: writing
\[
\calB(\lambda) \hookrightarrow \calB(\lambda - \mu) \otimes \calB(\mu), \quad
b \mapsto b' \otimes b'',
\]
for the (unique) inclusion of the Cartan component, we set $\Rend_\mu(b) := b''$.
\end{definition}

This definition can be extended to products of connected components as follows. Note that the Cartan component of $\calB = \calB(\lambda_1) \otimes \cdots \otimes \calB(\lambda_n)$ can be identified with $\calB(\lambda)$, where $\lambda = \lambda_1 + \cdots + \lambda_n$.
If $b \in \calB$ is not in the Cartan component we set $\Rend_\mu(b) := 0$, otherwise we define $\Rend_\mu(b)$ as above for $\calB(\lambda)$ using the unique inclusion $\calB(\lambda) \hookrightarrow \calB$.

We adopt the following notational conventions. We write $\Rend_i := \Rend_{\omega_i}$ for the right ends corresponding to the fundamental weights. We write $\rho := \omega_1 + \cdots + \omega_r$ for the smallest regular weight.
Then for any $\lambda \in \dwei$ such that $\lambda \succeq \rho$, and any element $b \in \calB(\lambda)$, we write
\[
\Rend(b) := \left( \Rend_1(b), \ldots, \Rend_r(b) \right).
\]

A convenient way to compute right ends is given in \cite[Proposition 6.7]{matassa-yuncken}, where $\sigma_k$ denotes the action of the Cartan braiding on the factors $k$ and $k + 1$ of a tensor product.

\begin{proposition}
\label{prop:computation-rightend}
The elements $b = b_1 \otimes \cdots \otimes b_n$ belonging to the Cartan component of $\calB(\lambda_1) \otimes \cdots \otimes \calB(\lambda_n)$ can be characterized by the following property:
\[
\sigma_{n - 1} \cdots \sigma_k (b_1 \otimes \cdots \otimes b_n) \neq 0, \quad k = 1, \ldots, n - 1.
\]
Moreover, the right end $\Rend_{\lambda_k} (b)$ is equal to the rightmost tensor factor of this expression.
\end{proposition}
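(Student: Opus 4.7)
The plan is to compare the composition $\sigma_{n-1} \cdots \sigma_k$ with the Cartan inclusions that define $\Rend_{\lambda_k}$, by tracking where each crystal morphism sends the highest weight vector $b_{\lambda_1} \otimes \cdots \otimes b_{\lambda_n}$ of the Cartan component. The key computation is iterative: using \cref{lem:cartan-extremal} with $w = e$, each $\sigma_i$ swaps the adjacent highest weight tensor factors $b_{\lambda_i} \otimes b_{\lambda_{i+1}}$. Telescoping from $i = k$ up to $i = n-1$ then yields
\[
\sigma_{n-1} \cdots \sigma_k (b_{\lambda_1} \otimes \cdots \otimes b_{\lambda_n}) = b_{\lambda_1} \otimes \cdots \otimes b_{\lambda_{k-1}} \otimes b_{\lambda_{k+1}} \otimes \cdots \otimes b_{\lambda_n} \otimes b_{\lambda_k},
\]
which I recognize as the highest weight element of the Cartan component of the reordered tensor product. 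Since a crystal morphism between connected crystals sending a highest weight vector to a highest weight vector is either zero or an isomorphism, this forces $\sigma_{n-1} \cdots \sigma_k$ to restrict to an isomorphism between the Cartan components of the source and target.

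This immediately gives the forward implication of the characterization: for $b$ in the Cartan component of the source, $\sigma_{n-1} \cdots \sigma_k(b)$ is a nonzero element of the Cartan component of the target. For the right-end identification, I will observe that the Cartan component of the target embeds into $\calB(\mu - \lambda_k) \otimes \calB(\lambda_k)$, where $\mu = \lambda_1 + \cdots + \lambda_n$, via the Cartan inclusion applied to the first $n - 1$ factors of the reordered product. Composing this embedding with the isomorphism above produces a crystal morphism $\calB(\mu) \hookrightarrow \calB(\mu - \lambda_k) \otimes \calB(\lambda_k)$, which must coincide with the Cartan inclusion used to define $\Rend_{\lambda_k}$ since both send highest weight to the same element. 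Hence the rightmost tensor factor of $\sigma_{n-1} \cdots \sigma_k(b)$ equals $\Rend_{\lambda_k}(b)$ as claimed.

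For the reverse implication, it suffices to handle $k = 1$. The plan is to identify $\sigma_{n-1} \cdots \sigma_1$ with the Cartan braiding $\sigma_{\calB(\lambda_1),\, \calB(\lambda_2) \otimes \cdots \otimes \calB(\lambda_n)}$ between $\calB(\lambda_1)$ and the product of the remaining factors (viewed as a product of connected crystals). Since the Cartan braiding vanishes outside the Cartan component of the source by definition, the nonvanishing of $\sigma_{n-1} \cdots \sigma_1(b)$ will then force $b$ to lie in the Cartan component. The step I expect to be the main obstacle is precisely this hexagon-type identification of iterated $\sigma_i$'s with a single Cartan braiding into a product of connected crystals; I expect it to follow from the analogous hexagon identity satisfied by the normalized $R$-matrices $q^{(\lambda, \lambda')} \hat{R}_{V(\lambda), V(\lambda')}$, whose $q \to 0$ limit is the Cartan braiding by the results of \cite{matassa-yuncken}.
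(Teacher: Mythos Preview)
This proposition is not proved in the present paper at all: it is quoted verbatim from \cite[Proposition 6.7]{matassa-yuncken} and used here as a black box. So there is no ``paper's own proof'' to compare your attempt against; any comparison would have to be with the argument in the cited reference.

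On its merits, your forward implication and the identification of $\Rend_{\lambda_k}(b)$ with the rightmost factor are clean and correct: tracking the highest weight vector through the $\sigma_i$'s via \cref{lem:cartan-extremal} and then invoking uniqueness of the Cartan inclusion $\calB(\mu) \hookrightarrow \calB(\mu - \lambda_k) \otimes \calB(\lambda_k)$ is exactly the right mechanism. For the converse, your reduction to $k = 1$ is logically fine (if the single condition $\sigma_{n-1}\cdots\sigma_1(b)\neq 0$ already forces $b$ into the Cartan component, then the full set of conditions certainly does), but the step you flag as ``the main obstacle'' really is the whole content of that direction. The hexagon-type identity you need, and the fact that the resulting Cartan braiding into a product of connected crystals vanishes off the Cartan component of the \emph{full} tensor product, are precisely what is established in \cite{matassa-yuncken} en route to their Proposition 6.7. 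So your argument for the converse is not so much an alternative proof as a pointer back to the same source the present paper is citing. If you want a self-contained argument, you would need to supply that vanishing directly (for instance by an induction on $n$, checking that if $\sigma_1(b)\neq 0$ then $b_1\otimes b_2$ lies in the Cartan component of $\calB(\lambda_1)\otimes\calB(\lambda_2)$, and then recursing on the image), rather than deferring to the $q\to 0$ limit of the $R$-matrix hexagon.
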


Another useful property concerning right ends is the following \cite[Proposition 6.9]{matassa-yuncken}.

\begin{proposition}
\label{prop:source-identity}
Let $c \in \calB(\rho)$ and write $\Rend(c) = (c_1, \ldots, c_r)$.
Suppose that $c \otimes b$ is in the Cartan component of $\calB(\rho) \otimes \calB(\lambda)$.
Then we have $\Rend_i(c \otimes b) = \Rend_i(c_i \otimes b)$ for $i = 1, \ldots, r$.
\end{proposition}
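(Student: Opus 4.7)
The plan is to reduce everything to a three-fold tensor product whose middle factor is the one we want to extract, and then apply \cref{prop:computation-rightend} twice. First, using $\omega_i \preceq \rho$, I would unfold $c$ via the inclusion of the Cartan component $\calB(\rho) \hookrightarrow \calB(\rho - \omega_i) \otimes \calB(\omega_i)$, writing $c = c' \otimes c_i$ with $c' \in \calB(\rho - \omega_i)$ and $c_i = \Rend_i(c)$ by the very definition of the right end. Tensoring this embedding with $\id_{\calB(\lambda)}$ produces an inclusion of crystals
\[
\calB(\rho) \otimes \calB(\lambda) \hookrightarrow \calB(\rho - \omega_i) \otimes \calB(\omega_i) \otimes \calB(\lambda)
\]
under which $c \otimes b$ is identified with $c' \otimes c_i \otimes b$.

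The next step is a compatibility observation: the Cartan component of $\calB(\rho) \otimes \calB(\lambda)$ is $\calB(\rho + \lambda)$, and this is also the Cartan component of the three-fold product $\calB(\rho - \omega_i) \otimes \calB(\omega_i) \otimes \calB(\lambda)$. Hence the hypothesis that $c \otimes b$ lies in the Cartan component of $\calB(\rho) \otimes \calB(\lambda)$ passes unchanged to $c' \otimes c_i \otimes b$, and $\Rend_i(c \otimes b) = \Rend_i(c' \otimes c_i \otimes b)$ because both are computed through the inclusion of the same $\calB(\rho + \lambda)$. This bookkeeping step is forced by uniqueness of the Cartan component.

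Now I would apply \cref{prop:computation-rightend} to the three-fold product with weights $\rho - \omega_i, \omega_i, \lambda$, extracting $\Rend_i$ as the rightmost factor of
\[
\sigma_2(c' \otimes c_i \otimes b) = c' \otimes \sigma_{\calB(\omega_i), \calB(\lambda)}(c_i \otimes b).
\]
Nonvanishing of this expression, guaranteed by the proposition, forces $\sigma_{\calB(\omega_i), \calB(\lambda)}(c_i \otimes b) \neq 0$, so $c_i \otimes b$ itself lies in the Cartan component of $\calB(\omega_i) \otimes \calB(\lambda)$. A second application of \cref{prop:computation-rightend} to this two-fold product expresses $\Rend_i(c_i \otimes b)$ as the rightmost factor of the same braided element $\sigma_{\calB(\omega_i), \calB(\lambda)}(c_i \otimes b)$, and the two rightmost factors evidently coincide, yielding the claim.

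The whole argument is a manipulation of Cartan components inside iterated tensor products, and I expect the main (mild) obstacle to be making this bookkeeping precise — namely, confirming that the inclusions $\calB(\rho + \lambda) \hookrightarrow \calB(\rho) \otimes \calB(\lambda)$ and $\calB(\rho + \lambda) \hookrightarrow \calB(\rho - \omega_i) \otimes \calB(\omega_i) \otimes \calB(\lambda)$ are compatible under tensoring $\calB(\rho) \hookrightarrow \calB(\rho - \omega_i) \otimes \calB(\omega_i)$ with $\id_{\calB(\lambda)}$. Once this uniqueness is invoked, \cref{prop:computation-rightend} does the rest.
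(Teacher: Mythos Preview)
The paper does not actually prove this proposition: it is quoted verbatim from \cite[Proposition 6.9]{matassa-yuncken} and stated without argument. There is therefore no ``paper's own proof'' to compare against here.

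That said, your proposal is a correct and self-contained derivation using only the tools reviewed in this paper. The key steps all check out: the embedding $\calB(\rho)\hookrightarrow\calB(\rho-\omega_i)\otimes\calB(\omega_i)$ does send $c$ to $c'\otimes c_i$ with $c_i=\Rend_i(c)$ by definition; tensoring with $\id_{\calB(\lambda)}$ is a crystal morphism carrying the Cartan component $\calB(\rho+\lambda)$ of $\calB(\rho)\otimes\calB(\lambda)$ isomorphically onto the Cartan component of the three-fold product (since the highest weight element is sent to the highest weight element); and \cref{prop:computation-rightend} applied with $(\lambda_1,\lambda_2,\lambda_3)=(\rho-\omega_i,\omega_i,\lambda)$ and $k=2$ gives $\Rend_i(c'\otimes c_i\otimes b)$ as the rightmost factor of $c'\otimes\sigma_{\calB(\omega_i),\calB(\lambda)}(c_i\otimes b)$, which is visibly the same as the rightmost factor of $\sigma_{\calB(\omega_i),\calB(\lambda)}(c_i\otimes b)=\Rend_i(c_i\otimes b)$. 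The bookkeeping point you flag --- that $\Rend_i(c\otimes b)$ equals $\Rend_i(c'\otimes c_i\otimes b)$ because both are defined via the same inclusion of $\calB(\rho+\lambda)$ --- is exactly what is needed and follows from uniqueness of crystal morphisms between connected crystals. Nothing is missing.
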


This identity can be used to compute the source of a path in the definition below.

\subsection{Construction}

Finally we come to the definition of the higher-rank graph given in \cite{matassa-yuncken}.
We note that our conventions here are slightly different (see \cref{rem:conventions-hrg}).

\begin{definition}[The higher-rank graph $\hrg$]
\label{def:higher-rank-g}
Let $\lieg$ be a finite-dimensional complex semisimple Lie algebra of rank $r$.
To it we associate a category $\hrg$ defined as follows.

\begin{itemize}
\item The \emph{vertices} are given by the right ends of $\calB(\rho)$ with $\rho = \omega_1 + \cdots + \omega_r$.
Note that the crystal $\calB(\rho)$ can be identified with the Cartan component of $\calB(\omega_1) \otimes \cdots \otimes \calB(\omega_r)$.
\item The \emph{paths} are given by pairs of the form $(v, b)$, with $v$ a vertex and $b \in \calB(\lambda)$, satisfying the following compatibility condition: writing $v = \Rend(c)$ for some $c \in \calB(\rho)$, we must have that $c \otimes b$ is in the Cartan component of $\calB(\rho) \otimes \calB(\lambda)$.
\item A path $e = (v, b)$ has \emph{range} $r(e) = v$ and \emph{source} $s(e) = \Rend(c \otimes b)$, where as above we write the vertex as $v = \Rend(c)$ for some $c \in \calB(\rho)$.
\item A path $(v, b)$ as above has \emph{degree} $\lambda$, using the natural identification $\dwei \cong \bbN^r$.
\item Given two composable paths $(v, b)$ and $(v', b')$, with $b \in \calB(\lambda)$ and $b' \in \calB(\lambda')$, their \emph{composition} is given by $(v, b) \cdot (v', b') = (v, \phi(b \otimes b'))$, where $\phi: \calB(\lambda) \otimes \calB(\lambda') \to \calB(\lambda + \lambda')$ denotes the projection onto the Cartan component.
\end{itemize}
\end{definition}

Equivalently, the source $s(e)$ of a path $e = (v, b)$ can be computed in terms of \cref{prop:source-identity} as follows: writing $v = \Rend(c) = (c_1, \ldots, c_r)$ for some $c \in \calB(\rho)$, we have
\[
s(e) = \left( \Rend_1(c_1 \otimes b), \ldots, \Rend_r(c_r \otimes b) \right).
\]

The results from the previous subsections can be used to show that $\hrg$ actually defines a category.
Furthermore, we have the following result from \cite[Theorem 7.8]{matassa-yuncken}.

\begin{theorem}
The category $\hrg$ is a higher-rank graph of rank $r$.
\end{theorem}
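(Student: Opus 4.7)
The plan is to verify the two defining aspects of a higher-rank graph: first, that $\hrg$ as described is a genuine category, and second, that the assignment $d(v, b) = \lambda$ for $b \in \calB(\lambda) = \calB(\sum_i n_i \omega_i)$, read as $(n_1, \ldots, n_r) \in \bbN^r$, gives a functor to $\bbN^r$ satisfying the factorization property.

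To show $\hrg$ is a category, I would first verify that composition is well-defined. Given composable paths $e = (v, b)$ of degree $\lambda$ and $e' = (v', b')$ of degree $\lambda'$, the formula $e \cdot e' = (v, \phi(b \otimes b'))$ requires $\phi(b \otimes b') \neq 0$ (i.e., $b \otimes b'$ lies in the Cartan component of $\calB(\lambda) \otimes \calB(\lambda')$) as well as the condition that, writing $v = \Rend(c)$, the element $c \otimes \phi(b \otimes b')$ lies in the Cartan component of $\calB(\rho) \otimes \calB(\lambda + \lambda')$. Both follow from a single stronger claim: that $c \otimes b \otimes b'$ lies in the Cartan component of the triple tensor product $\calB(\rho) \otimes \calB(\lambda) \otimes \calB(\lambda')$. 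This in turn reduces, via \cref{prop:computation-rightend}, to a chain of nonvanishing iterated Cartan braidings, which can be unpacked from the source/range compatibility $s(e) = r(e')$ together with the identity in \cref{prop:source-identity}. Associativity of composition is then immediate, since both iterated compositions project to the Cartan component of the same triple tensor product. Identities at each vertex $v$ are given by $(v, b_0)$, where $b_0$ is the unique element of the trivial crystal $\calB(0)$, the source and range conditions being trivial in that case.

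For the degree functor, additivity under composition is built into the definition, since $\phi(b \otimes b') \in \calB(\lambda + \lambda')$ whenever $b \in \calB(\lambda)$ and $b' \in \calB(\lambda')$. For the factorization property, given $e = (v, b)$ with $b \in \calB(\mu)$ and a decomposition $\mu = \lambda + \lambda'$ in $\dwei$, I would apply the unique inclusion of the Cartan component $\calB(\mu) \hookrightarrow \calB(\lambda) \otimes \calB(\lambda')$ to write $b \mapsto b_1 \otimes b_2$, and then set $e_1 = (v, b_1)$ and $e_2 = (s(e_1), b_2)$. Validity of these two paths follows from the observation that, writing $v = \Rend(c)$, the condition that $c \otimes b$ lies in the Cartan component of $\calB(\rho) \otimes \calB(\mu)$ lifts, via the same unique embedding, to $c \otimes b_1 \otimes b_2$ lying in the Cartan component of $\calB(\rho) \otimes \calB(\lambda) \otimes \calB(\lambda')$; this in turn implies both pairwise Cartan conditions needed for $e_1$ and $e_2$ to be paths. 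The composition $e_1 \cdot e_2$ then equals $(v, \phi(b_1 \otimes b_2)) = (v, b)$, directly from the defining property of the Cartan embedding. Uniqueness of the factorization is immediate from the uniqueness of that embedding.

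The main obstacle is bookkeeping the compatibility conditions — which assert that various elements lie in Cartan components of two- and three-fold tensor products — as we pass back and forth between these embeddings. This is made tractable by \cref{prop:computation-rightend}, which replaces the somewhat abstract ``Cartan component membership'' by the concrete condition of nonvanishing iterated Cartan braidings, together with \cref{prop:source-identity} which controls how right ends propagate through a composition. Once these two tools are in place, every verification reduces to a short diagram chase between the triple tensor product $\calB(\rho) \otimes \calB(\lambda) \otimes \calB(\lambda')$ and its pairwise contractions.
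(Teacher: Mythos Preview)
The paper does not supply its own proof of this statement; it simply cites \cite[Theorem 7.8]{matassa-yuncken}. Your sketch is a reasonable outline of how that argument proceeds: you correctly reduce both the category axioms and the factorization property to the assertion that suitable elements lie in Cartan components of triple tensor products, and you correctly point to \cref{prop:computation-rightend} and \cref{prop:source-identity} as the main technical tools.

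There is, however, one genuine gap in your outline. You tacitly assume that whether $(v, b)$ is a path, and what its source is, depend only on the vertex $v$ and not on the particular representative $c \in \calB(\rho)$ with $\Rend(c) = v$. This well-definedness is essential for the construction and is established in \cite{matassa-yuncken}, but it does not follow from the two propositions you invoke alone. Concretely, in your composition step you know that $c' \otimes b'$ lies in a Cartan component for \emph{some} $c' \in \calB(\rho)$ with $\Rend(c') = v' = s(e)$; to conclude that $c \otimes b \otimes b'$ lies in the Cartan component of the triple product, you must transfer this information from $c'$ to the element $c \otimes b$ (which sits in $\calB(\rho + \lambda)$, not $\calB(\rho)$). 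That transfer is precisely the well-definedness statement, and your phrase ``unpacked from the source/range compatibility'' hides it rather than proves it. Once this lemma is in hand, the rest of your outline goes through as written.
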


\begin{remark}
\label{rem:conventions-hrg}
The higher-rank graph $\hrg$ described here is the opposite (as a category) of the one described in \cite[Theorem 7.8]{matassa-yuncken} and denoted there by $\Lambda_{\lieg}$.
The reason for this change in conventions is to have a better match between the directions of paths in $\hrg$ and the conventional way in which the Bruhat graph of $W$ is described.
\end{remark}

The structure of the higher-rank graph $\hrg$ appears to be rather intricate, although various general results are proven in the cited paper.
For instance, we have the following result regarding the vertex set of $\hrg$ from \cite[Proposition 10.3]{matassa-yuncken}.

\begin{proposition}
\label{prop:weyl-vertex}
The Weyl group $W$ embeds into the vertex set of $\hrg$ by $w \mapsto \Rend(b_{w \rho})$.
\end{proposition}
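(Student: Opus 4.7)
The plan is to compute $\Rend(b_{w\rho})$ explicitly, show it equals $(b_{w\omega_1}, \ldots, b_{w\omega_r})$, and then deduce injectivity from the faithfulness of the Weyl group action on the weight lattice.

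First, I would identify the image of $b_{w\rho}$ under the Cartan component inclusion $\calB(\rho) \hookrightarrow \calB(\omega_1) \otimes \cdots \otimes \calB(\omega_r)$. The natural candidate is the pure tensor $b_{w\omega_1} \otimes \cdots \otimes b_{w\omega_r}$, since it has total weight $w\omega_1 + \cdots + w\omega_r = w\rho$. To certify that this element actually belongs to the Cartan component, I would apply the characterization of \cref{prop:computation-rightend}: iterating \cref{lem:cartan-extremal} through the tensor product, each Cartan braiding $\sigma_i$ acts as the flip on a pair of extremal tensor factors $b_{w\omega_i} \otimes b_{w\omega_{i+1}}$, so the composite $\sigma_{r-1} \cdots \sigma_k$ merely permutes extremal factors and is nonzero for every $k$. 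By the multiplicity-one property of extremal weights in $\calB(\rho)$, the image of $b_{w\rho}$ must coincide with this pure tensor.

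Next, with this identification in hand, I would read off the right ends using the same two tools. By \cref{prop:computation-rightend}, $\Rend_k(b_{w\rho})$ equals the rightmost factor of $\sigma_{r-1} \cdots \sigma_k(b_{w\omega_1} \otimes \cdots \otimes b_{w\omega_r})$, and by successive applications of \cref{lem:cartan-extremal} (valid because at each stage the relevant adjacent pair consists of extremal elements of the form $b_{w\omega_j}$), this composite slides $b_{w\omega_k}$ into position $r$. Hence
\[
\Rend(b_{w\rho}) = (b_{w\omega_1}, \ldots, b_{w\omega_r}).
\]

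Finally, for injectivity: if $\Rend(b_{w\rho}) = \Rend(b_{w'\rho})$, then $b_{w\omega_k} = b_{w'\omega_k}$ in $\calB(\omega_k)$ for every $k \in I$, forcing $w\omega_k = w'\omega_k$ since extremal elements are in bijection with extremal weights. As $\{\omega_k\}_{k \in I}$ is a basis of the weight lattice and $W$ acts faithfully on it, this gives $w = w'$.

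The only nontrivial bookkeeping is in the first paragraph: \cref{lem:cartan-extremal} is stated for two crystals, so one must verify that every intermediate term produced while sliding factors around remains a pure tensor of extremal vectors $b_{w\omega_j}$, so that the lemma continues to apply. Once this is checked, identifying the image of $b_{w\rho}$ and computing the right ends reduce to the same combinatorial observation, and injectivity is immediate from the faithfulness of the $W$-action.
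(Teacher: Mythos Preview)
Your proof is correct and follows exactly the approach the paper indicates: it notes the result is ``essentially a consequence of \cref{lem:cartan-extremal}'' and records the key identity $\Rend(b_{w\rho}) = (b_{w\omega_1},\ldots,b_{w\omega_r})$, from which injectivity is immediate by faithfulness of the $W$-action on weights. Your write-up simply makes explicit the bookkeeping (via \cref{prop:computation-rightend} and multiplicity one of extremal weights) that the paper leaves implicit by citing \cite[Proposition~10.3]{matassa-yuncken}.
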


We note that this is essentially a consequence of \cref{lem:cartan-extremal} and the fact that the stabilizer of $\rho$ under the action of $W$ is trivial.
In the following we often make a small abuse of notation and identify the right end $\Rend(b_{w \rho})$ with the element $b_{w \rho}$.
This is justified, since for $b_{w \rho} = b_1 \otimes \cdots \otimes b_r \in \calB(\rho) \subseteq \calB(\omega_1) \otimes \cdots \otimes \calB(\omega_r)$ we get $\Rend(b_{w \rho}) = (b_1, \ldots, b_r)$.

\begin{remark}
In general the vertex set of $\hrg$ does not correspond to the Weyl group of $\lieg$.
This is shown for the Lie algebra $\lieg = C_2$ in \cite[Example 10.6]{matassa-yuncken}.
\end{remark}

We can define a partial order on the vertex set of $\hrg$ as follows. First, any crystal $\calB(\lambda)$ is partially ordered by setting $b \leq b'$ whenever $b = \tilde{F}_{i_1} \cdots \tilde{F}_{i_n} b'$ for some Kashiwara operators.
Then, for vertices $v = (b_1, \ldots, b_r)$ and $v' = (b_1', \ldots, b_r')$, we say that $v \leq v'$ if $b_i \leq b_i'$ for every $i$.
It is proven in \cite[Proposition 7.11]{matassa-yuncken} that for any path $e \in \hrg$ we have $r(e) \leq s(e)$ (once again, recall the opposite conventions from \cref{rem:conventions-hrg}).
Considering $\hrg$ as a directed graph, this shows that all cycles must have length one (that is, they are loops).

\section{Some technical results}
\label{sec:technical-results}

In this section we prove some results needed for the next one, where we discuss the problem of embedding the Bruhat graph into the higher-rank graph $\hrg$.
We begin by exploring the interplay between the Bruhat order and the Cartan component of a tensor product of connected crystals.
The main result here is that $b_{w \lambda} \otimes b_{w' \lambda'}$ is in the Cartan component of $\calB(\lambda) \otimes \calB(\lambda')$, provided $w \geq w'$ with respect to the Bruhat order.
This is used to study certain paths in $\hrg$ corresponding to any such pair $(w, w')$ of Weyl group elements.

\subsection{Bruhat order and crystals}

Recall the following standard fact related to the length function $\ell$ on a Weyl group: we have $\ell(s_i w) > \ell(w)$ if and only if $w^{-1} \alpha_i \succ 0$.
This has the following well-known consequence for the maps $\varphi_i$ and $\varepsilon_i$ of a crystal.

\begin{lemma}
\label{lem:epsilon-w}
Let $\lambda \in \dwei$ and $w \in W$. The following holds:
\begin{enumerate}
\item if $\ell(s_i w) > \ell(w)$ then $\varepsilon_i(b_{w \lambda}) = 0$,
\item if $\ell(s_i w) < \ell(w)$ then $\varphi_i(b_{w \lambda}) = 0$.
\end{enumerate}
\end{lemma}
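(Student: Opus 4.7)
The plan is to argue by contradiction in each case, using the $W$-invariance of the set of weights of $\calB(\lambda)$.

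For Case (1), suppose for contradiction that $\ell(s_i w) > \ell(w)$ but $\varepsilon_i(b_{w\lambda}) \geq 1$. By the standard fact recalled just before the lemma, $w^{-1}\alpha_i$ is then a positive root. Nonvanishing of $\varepsilon_i(b_{w\lambda})$ means that $\tilde{E}_i b_{w\lambda}$ is a nonzero element of $\calB(\lambda)$, so the weight $w\lambda + \alpha_i$ occurs in $\calB(\lambda)$. Since the multiset of weights of $\calB(\lambda)$ is $W$-invariant, applying $w^{-1}$ shows that $\lambda + w^{-1}\alpha_i$ must also occur. But this weight lies strictly above $\lambda$ in the dominance order (as $w^{-1}\alpha_i \succ 0$), contradicting the fact that $\lambda$ is the highest weight of $\calB(\lambda)$.

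Case (2) is entirely symmetric. If $\ell(s_i w) < \ell(w)$, then $w^{-1}\alpha_i$ is a negative root, and the assumption $\varphi_i(b_{w\lambda}) \geq 1$ would produce the weight $w\lambda - \alpha_i$ in $\calB(\lambda)$. Its $W$-translate $\lambda - w^{-1}\alpha_i = \lambda + (-w^{-1}\alpha_i)$ then lies strictly above $\lambda$, once again contradicting highest-weightness.

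The only nontrivial input is the $W$-invariance of the multiset of weights of $\calB(\lambda)$, which is a standard feature of crystals of highest weight modules. There is no real obstacle here: once this is granted, the argument is a short contradiction. The one point worth double-checking is the direction of the equivalence $\ell(s_i w) > \ell(w) \Leftrightarrow w^{-1}\alpha_i \succ 0$, so that the offending weight $\lambda + w^{-1}\alpha_i$ (respectively $\lambda - w^{-1}\alpha_i$ in Case (2)) does indeed lie strictly above $\lambda$ rather than below.
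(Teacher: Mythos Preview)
Your proof is correct and follows essentially the same approach as the paper: both argue by contradiction, use the equivalence $\ell(s_i w) > \ell(w) \Leftrightarrow w^{-1}\alpha_i \succ 0$, and produce a weight $\lambda + w^{-1}\alpha_i \succ \lambda$ in $\calB(\lambda)$, contradicting highest-weightness. The only cosmetic difference is that the paper phrases the key step as acting with $w^{-1}$ via the Weyl group action on crystals (reviewed in the preliminaries), whereas you invoke $W$-invariance of the weight multiset directly; these are equivalent here.
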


\begin{proof}
(1) Suppose on the contrary that $\varepsilon_i(b_{w \lambda}) > 0$, so that the element $\tilde{E}_i b_{w \lambda}$ is non-zero and of weight $w \lambda + \alpha_i$. Then, acting with $w^{-1}$ on it, we obtain another element of $\calB(\lambda)$ of weight $\lambda + w^{-1} \alpha_i$.
Since $\ell(s_i w) > \ell(w)$ implies $w^{-1} \alpha_i \succ 0$, we find that $\lambda + w^{-1} \alpha_i \succ \lambda$.
But this is a contradiction, since $\lambda$ is the highest weight of $\calB(\lambda)$.

(2) Similar to (1), by considering $\tilde{F}_i b_{w \lambda}$ and using $w^{-1} \alpha_i \prec 0$.
\end{proof}

We use this to investigate the action of $\tilde{F}_i$ on particular elements of a tensor product.

\begin{lemma}
\label{lem:action-F}
Let $\lambda, \lambda' \in \dwei$ and $w, w' \in W$.
Let $n = (w \lambda, \alpha_i^\vee)$ and $n' = (w' \lambda', \alpha_i^\vee)$.
Suppose that $\ell(s_i w) > \ell(w)$ and $\ell(s_i w') > \ell(w')$. Then:
\begin{enumerate}
\item for $k \leq n$ we have $\tilde{F}_i^k (b_{w \lambda} \otimes b_{w' \lambda'}) = \tilde{F}_i^k b_{w \lambda} \otimes b_{w' \lambda'}$,
\item for $n < k \leq n + n'$ we have $\tilde{F}_i^k (b_{w \lambda} \otimes b_{w' \lambda'}) = \tilde{F}_i^n b_{w \lambda} \otimes \tilde{F}_i^{k - n} b_{w' \lambda'}$.
\end{enumerate}
\end{lemma}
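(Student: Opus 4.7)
The plan is to combine the explicit tensor product rule from \eqref{eq:tensor-rule} with the vanishing statements of \cref{lem:epsilon-w} and then proceed by a short induction on $k$. First I would invoke \cref{lem:epsilon-w}(1) on both $b_{w \lambda}$ and $b_{w' \lambda'}$, which are extremal of weights $w \lambda$ and $w' \lambda'$, to deduce $\varepsilon_i(b_{w \lambda}) = \varepsilon_i(b_{w' \lambda'}) = 0$. The general identity $\varphi_i(b) - \varepsilon_i(b) = (\wt(b), \alpha_i^\vee)$ then yields $\varphi_i(b_{w \lambda}) = n$ and $\varphi_i(b_{w' \lambda'}) = n'$, so both elements are tops of $\tilde{F}_i$-strings of lengths $n$ and $n'$ respectively. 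In particular, for $0 \leq j \leq n$ we have $\varphi_i(\tilde{F}_i^j b_{w \lambda}) = n - j$, and analogously for the second tensor factor.

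For part (1) I would argue by induction on $k \leq n$, the case $k = 0$ being trivial. Assuming the formula at step $k - 1$, apply one more $\tilde{F}_i$ to $\tilde{F}_i^{k-1} b_{w \lambda} \otimes b_{w' \lambda'}$. The tensor rule \eqref{eq:tensor-rule} compares $\varphi_i(\tilde{F}_i^{k-1} b_{w \lambda}) = n - k + 1$ with $\varepsilon_i(b_{w' \lambda'}) = 0$; since $k \leq n$ gives strict inequality $n - k + 1 > 0$, the Kashiwara operator acts on the left factor, producing the required $\tilde{F}_i^k b_{w \lambda} \otimes b_{w' \lambda'}$.

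For part (2) I would take the $k = n$ result from part (1) as base and induct on $k$ in the range $n < k \leq n + n'$. Starting from $\tilde{F}_i^n b_{w \lambda} \otimes \tilde{F}_i^{k-n-1} b_{w' \lambda'}$, the tensor rule now compares $\varphi_i(\tilde{F}_i^n b_{w \lambda}) = 0$ with $\varepsilon_i(\tilde{F}_i^{k-n-1} b_{w' \lambda'}) = k - n - 1$; the strict inequality $0 > k - n - 1$ fails, so $\tilde{F}_i$ acts on the right factor and yields $\tilde{F}_i^n b_{w \lambda} \otimes \tilde{F}_i^{k-n} b_{w' \lambda'}$. The only subtle point, and the one I would make sure to double-check, is the transition at $k = n$: there the condition $\varphi_i > \varepsilon_i$ becomes an equality $0 = 0$ rather than a strict inequality, which is precisely what switches the action from the first tensor factor to the second. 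Beyond this bookkeeping I do not anticipate any real obstacle, since everything reduces to applying \eqref{eq:tensor-rule} once the two $\varepsilon_i$-vanishings are established.
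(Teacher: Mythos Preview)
Your argument is correct and mirrors the paper's proof essentially step for step: both invoke \cref{lem:epsilon-w} to obtain $\varepsilon_i(b_{w\lambda})=\varepsilon_i(b_{w'\lambda'})=0$, deduce $\varphi_i(b_{w\lambda})=n$ and $\varphi_i(b_{w'\lambda'})=n'$, and then feed the resulting string data into the tensor rule \eqref{eq:tensor-rule}. The only cosmetic difference is that the paper separately remarks that $n,n'\geq 0$ (which you recover implicitly from $\varphi_i\geq 0$), and phrases the iteration less formally than your explicit induction.
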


\begin{proof}
First we observe that both $n$ and $n'$ are non-negative, so that the claims make sense. To see this, we rewrite $n = (\lambda, (w^{-1} \alpha_i)^\vee)$ using $W$-invariance (similarly for $n'$). The condition $\ell(s_i w) > \ell(w)$ implies $w^{-1} \alpha_i \succ 0$, which gives $n \geq 0$ since $\lambda$ is dominant.

(1) We have $\varepsilon_i(b_{w \lambda}) = 0$ by \cref{lem:epsilon-w}, since $\ell(s_i w) > \ell(w)$.
This means that $n = \varphi_i(b_{w \lambda})$.
Furthermore, this gives $\varphi_i(\tilde{F}_i^k b_{w \lambda}) = n - k$ for $k \leq n$.
Similarly we have $\varepsilon_i(b_{w' \lambda'}) = 0$ and $n' = \varphi_i(b_{w' \lambda'})$.
Then the claim easily follows from the tensor product rule \eqref{eq:tensor-rule}, since we have $\varphi_i(\tilde{F}_i^k b_{w \lambda}) > \varepsilon_i(b_{w' \lambda'})$ for $k < n$, which means that we always act on the first factor.

(2) Since $\varphi_i(\tilde{F}_i^n b_{w \lambda}) = 0$, all successive applications of $\tilde{F}_i$ are on the second factor.
\end{proof}

The previous result can be rephrased in terms of the Weyl group action on crystals.

\begin{corollary}
\label{cor:action-F-weyl}
Under the assumptions and notations \cref{lem:action-F}, we have
\[
\tilde{F}_i^n (b_{w \lambda} \otimes b_{w' \lambda'}) = \wgc_i b_{w \lambda} \otimes b_{w' \lambda'}, \quad
\wgc_i (b_{w \lambda} \otimes b_{w' \lambda'}) = \wgc_i b_{w \lambda} \otimes \wgc_i b_{w' \lambda'}.
\]
\end{corollary}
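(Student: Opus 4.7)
The plan is to derive both identities as essentially direct corollaries of \cref{lem:action-F}, by matching up $\tilde{F}_i^k$ with $\wgc_i$ via the appropriate weight computation.

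First I would reinterpret $\wgc_i$ on the two relevant elements. Since $\ell(s_i w) > \ell(w)$ implies $w^{-1} \alpha_i \succ 0$, the integer $n = (w \lambda, \alpha_i^\vee) = (\lambda, (w^{-1}\alpha_i)^\vee)$ is non-negative (this was already observed in the proof of \cref{lem:action-F}), and likewise $n' \geq 0$. By the definition of the Weyl group action on crystals, this gives $\wgc_i b_{w\lambda} = \tilde{F}_i^n b_{w\lambda}$ and $\wgc_i b_{w'\lambda'} = \tilde{F}_i^{n'} b_{w'\lambda'}$, with the edge cases $n = 0$ or $n' = 0$ handled trivially since $\tilde{F}_i^0$ is the identity.

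For the first identity, I would simply apply part (1) of \cref{lem:action-F} with $k = n$, which gives
\[
\tilde{F}_i^n (b_{w\lambda} \otimes b_{w'\lambda'}) = \tilde{F}_i^n b_{w\lambda} \otimes b_{w'\lambda'},
\]
and then rewrite the right-hand side as $\wgc_i b_{w\lambda} \otimes b_{w'\lambda'}$ using the previous step.

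For the second identity, I would compute the weight of the tensor product as $\wt(b_{w\lambda} \otimes b_{w'\lambda'}) = w\lambda + w'\lambda'$, so that $(\wt(b_{w\lambda} \otimes b_{w'\lambda'}), \alpha_i^\vee) = n + n' \geq 0$. Hence by the definition of the Weyl group action we have $\wgc_i(b_{w\lambda} \otimes b_{w'\lambda'}) = \tilde{F}_i^{n+n'}(b_{w\lambda} \otimes b_{w'\lambda'})$. Applying part (2) of \cref{lem:action-F} with $k = n + n'$ (which lies in the range $n < k \leq n + n'$ provided $n' > 0$, with the case $n' = 0$ handled separately via part (1)) yields
\[
\tilde{F}_i^{n+n'}(b_{w\lambda} \otimes b_{w'\lambda'}) = \tilde{F}_i^n b_{w\lambda} \otimes \tilde{F}_i^{n'} b_{w'\lambda'} = \wgc_i b_{w\lambda} \otimes \wgc_i b_{w'\lambda'}.
\]

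There is no real obstacle: the whole argument is bookkeeping on top of \cref{lem:action-F}. The only mild care needed is with boundary cases $n = 0$ or $n' = 0$, which I would dispatch by noting that in those cases the relevant application of $\wgc_i$ is the identity and the corresponding part of \cref{lem:action-F} reduces to a triviality.
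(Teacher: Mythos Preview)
Your proposal is correct and follows essentially the same approach as the paper's own proof: identify $\wgc_i$ with the appropriate power of $\tilde{F}_i$ via the non-negativity of $n$, $n'$, and $n+n'$, then read off both identities from \cref{lem:action-F}. Your treatment is slightly more explicit about the boundary cases $n=0$ or $n'=0$, which the paper simply absorbs into the phrase ``immediately follow''.
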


\begin{proof}
By definition of the Weyl group action we have $\wgc_i b_{w \lambda} = \tilde{F}_i^n b_{w \lambda}$ and $\wgc_i b_{w' \lambda'} = \tilde{F}_i^{n'} b_{w' \lambda'}$, since $n = (w \lambda, \alpha_i^\vee) \geq 0$ and $n' = (w' \lambda', \alpha_i^\vee) \geq 0$. Similarly we have $\wgc_i (b_{w \lambda} \otimes b_{w' \lambda'}) = \tilde{F}_i^{n + n'} (b_{w \lambda} \otimes b_{w' \lambda'})$. Then the identities immediately follow from \cref{lem:action-F}.
\end{proof}

Before proceeding, we need the following well-known result concerning the Bruhat order.

\begin{lemma}
\label{lem:reduced-removal}
Let $w, w' \in W$ be such that $w > w'$ in the Bruhat order. Let $n = \ell(w)$ and $m = \ell(w) - \ell(w')$.
Fix a reduced expression $w = s_{i_1} \cdots s_{i_n}$.
Then there exists a decreasing sequence $j_1 > \cdots > j_m$ of indices (for the decomposition of $w$) such that:
\begin{itemize}
\item the word $s_{i_1} \cdots \hat{s}_{j_k} \cdots \hat{s}_{j_1} \cdots s_{i_n}$, obtained from $w$ by removing the letters $\hat{s}_{j_k}, \ldots \hat{s}_{j_1}$, is a reduced expression for any $k = 1, \ldots, m$,
\item we have $s_{i_1} \cdots \hat{s}_{j_m} \cdots \hat{s}_{j_1} \cdots s_{i_n} = w'$.
\end{itemize}
\end{lemma}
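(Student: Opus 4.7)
The plan is to proceed by induction on $m = \ell(w) - \ell(w')$, with the case $m = 0$ being vacuous. The main tool is the Lifting Property of the Bruhat order (e.g.\ Bj\"orner-Brenti, Proposition 2.2.7): since $s_{i_n}$ is a right descent of $w$, we have $\min(w', w's_{i_n}) \leq ws_{i_n}$, and moreover $u \leq v$ is equivalent to $us \leq v$ whenever $s$ is a right descent of $v$. Applied to our setup, this yields a clean dichotomy governed by whether $w' \leq ws_{i_n}$.

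In Case A, when $w' \leq ws_{i_n}$, I would set $j_1 = n$ and invoke the inductive hypothesis on the pair $(ws_{i_n}, w')$ with reduced expression $s_{i_1}\cdots s_{i_{n-1}}$ to obtain the remaining positions $j_2 > \cdots > j_m$ in $\{1, \ldots, n-1\}$. In Case B, when $w' \not\leq ws_{i_n}$, the Lifting Property forces $\ell(w's_{i_n}) < \ell(w')$ together with $w's_{i_n} \leq ws_{i_n}$, and I would instead apply induction to the \emph{different} pair $(ws_{i_n}, w's_{i_n})$ (still of length difference $m$) using the same reduced expression. The resulting positions $j_1 > \cdots > j_m$ would be reused verbatim for the original problem, noting that they automatically lie in $\{1, \ldots, n-1\}$.

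The main obstacle, and the subtle point of the whole argument, lies in Case B: one must verify that each intermediate word for the \emph{original} problem is reduced. Because position $n$ is never removed, the $k$-th intermediate word equals a reduced expression for the corresponding sub-problem element $u_k'$ followed by $s_{i_n}$, so reducedness is equivalent to $\ell(u_k' s_{i_n}) > \ell(u_k')$. I would prove this by contradiction: assuming $s_{i_n}$ to be a right descent of $u_k'$, the Lifting Property would give the equivalence $w' \leq u_k' \Leftrightarrow w's_{i_n} \leq u_k'$; since $w's_{i_n} \leq u_k'$ holds along the inductive cover chain, this promotes the bound to $w' \leq u_k'$, which combined with $u_k' \leq ws_{i_n}$ contradicts the defining assumption $w' \not\leq ws_{i_n}$ of Case B. This clean contradiction is what lets the inductive step in Case B close.
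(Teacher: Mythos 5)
Your argument is correct in substance, but it takes a genuinely different route from the paper. The paper disposes of this lemma in two lines: it cites the known ``increasing'' version from Bj\"orner--Brenti, Section 2.7 (letters are deleted from the left, at increasing positions, with each intermediate word reduced), and transfers it to the decreasing version stated here by applying that result to $w^{-1} > (w')^{-1}$, using that inversion reverses reduced expressions. You instead prove the decreasing version from scratch by induction, with the Lifting Property as the engine. Your Case~A is routine, and the delicate point you isolate in Case~B is exactly where the content lies: appending $s_{i_n}$ to each intermediate reduced word $u_k'$ of the subproblem must stay reduced, and your contradiction --- if $u_k' s_{i_n} < u_k'$ then $w' s_{i_n} \le u_k'$ lifts to $w' \le u_k' \le w s_{i_n}$, contradicting the defining hypothesis of Case~B --- is a correct use of the lifting and subword properties. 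The trade-off is the usual one: the paper's proof is a short reduction to a textbook fact, while yours is longer but self-contained and makes visible exactly which order-theoretic input is needed.

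One bookkeeping correction: the induction cannot run on $m$ alone since, as you yourself note, Case~B passes to a pair with the \emph{same} length difference $m$. Both cases replace the top element $w$ by $w s_{i_n}$ of length $n-1$, so the induction should be on $n = \ell(w)$ (or lexicographically on $(m,n)$), with the case $n=0$ or $m=0$ vacuous. With that adjustment the argument closes.
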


\begin{proof}
This is proven in \cite[Section 2.7]{bjorner-brenti} for the increasing case, that is by removing letters from the left.
The decreasing case follows by applying this result to $w^{-1} > (w')^{-1}$, since this is equivalent to $w > w'$ and inversion reverses the reduced decomposition.
\end{proof}

We can use the previous results to prove the following proposition, which relates the Bruhat order on $W$ to the Cartan component of a tensor product.

\begin{proposition}
\label{prop:cartan-bruhat}
Let $\lambda, \lambda' \in \dwei$ and $w, w' \in W$.
Suppose that $w \geq w'$ in the Bruhat order.
Then $b_{w \lambda} \otimes b_{w' \lambda'}$ is in the Cartan component of $\calB(\lambda) \otimes \calB(\lambda')$.
\end{proposition}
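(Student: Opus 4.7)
The plan is to prove the statement by strong induction on $\ell(w)$, using the lifting property of the Bruhat order to split the inductive step into two cases, each of which is resolved by one of the previously established results (\cref{cor:action-F-weyl} or \cref{lem:action-F}) applied to a suitable "smaller" pair.

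The base case $\ell(w) = 0$ forces $w = w' = e$, and $b_\lambda \otimes b_{\lambda'}$ is of course the highest-weight vector of the Cartan component. For the inductive step, I pick some left descent $s_i$ of $w$, so $\ell(s_i w) < \ell(w)$. The lifting property of the Bruhat order (see e.g.\ \cite[Proposition 2.2.7]{bjorner-brenti}) then gives two alternatives. In \emph{Case (a)}, $s_i$ is also a left descent of $w'$, and one has $s_i w \geq s_i w'$. In \emph{Case (b)}, $s_i w' \succ w'$ (i.e.\ $\ell(s_i w') > \ell(w')$), and one has $s_i w \geq w'$. In both cases, the pair appearing on the right has strictly smaller length in the first entry, so the inductive hypothesis applies.

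In Case (a), the induction gives that $b_{s_i w \lambda} \otimes b_{s_i w' \lambda'}$ lies in the Cartan component. Since $\ell(s_i(s_i w)) = \ell(w) > \ell(s_i w)$ and similarly for $w'$, \cref{cor:action-F-weyl} (applied with $s_i w$ and $s_i w'$ playing the roles of $w,w'$) yields
\[
\wgc_i \bigl( b_{s_i w \lambda} \otimes b_{s_i w' \lambda'} \bigr) = b_{w \lambda} \otimes b_{w' \lambda'}.
\]
Because $\wgc_i$ is by definition a string of Kashiwara operators, the left-hand side stays in the same connected component, and we are done. In Case (b), the induction gives that $b_{s_i w \lambda} \otimes b_{w' \lambda'}$ lies in the Cartan component. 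The length assumptions needed for \cref{lem:action-F}(1), namely $\ell(s_i (s_i w)) > \ell(s_i w)$ and $\ell(s_i w') > \ell(w')$, are both satisfied, so with $n = (s_i w \cdot \lambda, \alpha_i^\vee)$ we obtain
\[
\tilde F_i^n \bigl( b_{s_i w \lambda} \otimes b_{w' \lambda'} \bigr) = \tilde F_i^n b_{s_i w \lambda} \otimes b_{w' \lambda'} = \wgc_i b_{s_i w \lambda} \otimes b_{w' \lambda'} = b_{w \lambda} \otimes b_{w' \lambda'},
\]
which again preserves the connected component.

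The only slightly subtle point is checking that the two lifting-property cases exhaust all possibilities and yield inequalities in the Bruhat order that can drive the induction; this is standard, but it is the conceptual crux of the argument, since one wants to reduce a general pair $w \geq w'$ to one where a simple reflection decreases $w$ while either also decreasing $w'$ or leaving the second entry unchanged. The main "obstacle" is therefore not a calculation but the realization that a single left descent of $w$ need not be a left descent of $w'$, which is precisely what forces the case split and explains why both \cref{cor:action-F-weyl} (symmetric, acts on both factors) and \cref{lem:action-F}(1) (asymmetric, acts only on the first factor) are needed.
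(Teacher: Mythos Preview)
Your proof is correct. It uses the same two workhorse results as the paper (\cref{lem:action-F} and \cref{cor:action-F-weyl}) and the same underlying idea: at each stage either $s_i$ acts on both tensor factors via $\wgc_i$, or only on the first factor via a power of $\tilde F_i$. The difference is purely organizational. The paper fixes a reduced expression $w = s_{i_1}\cdots s_{i_n}$, invokes the subword property (\cref{lem:reduced-removal}) to realise $w'$ as a subword, and then builds $b_{w\lambda}\otimes b_{w'\lambda'}$ from $b_\lambda\otimes b_{\lambda'}$ by an explicit product $T_1\cdots T_n$ of operators, with $T_k = \wgc_{i_k}$ when the $k$-th letter is kept in $w'$ and $T_k = \tilde F_{i_k}^{a_k}$ when it is deleted. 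You instead run an induction on $\ell(w)$, peeling off one left descent at a time and using the lifting property to decide which case you are in. Your approach avoids the explicit bookkeeping of \cref{lem:reduced-removal} and is arguably a bit cleaner; the paper's approach has the minor advantage of exhibiting a single explicit string of Kashiwara operators taking the highest-weight vector to $b_{w\lambda}\otimes b_{w'\lambda'}$.
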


\begin{proof}
Fix a reduced decomposition $w = s_{i_1} \cdots s_{i_n}$.
If $w = w'$ we can apply repeatedly \cref{cor:action-F-weyl} to get $\wgc_1 \cdots \wgc_n (b_\lambda \otimes b_{\lambda'})
= b_{w \lambda} \otimes b_{w \lambda'}$.
This shows that $b_{w \lambda} \otimes b_{w \lambda'}$ is in the Cartan component, since it is obtained from $b_\lambda \otimes b_{\lambda'}$ by the action of Kashiwara operators.

If $w \neq w'$ we proceed as follows.
Write $w_k = s_{i_k} \cdots s_{i_n}$ for $k = 1, \ldots, n$ and note that this is reduced for each $k$.
By \cref{lem:reduced-removal}, we can obtain $w'$ from $w$ by removing a decreasing sequence $j_1 > \cdots > j_m$, such that at each step we get a reduced decomposition.
Let us write $w'_k = \tilde{s}_k \cdots \tilde{s}_n$, where $\tilde{s}_k = 1$ if $k = j_a$ for some $a$ and $\tilde{s}_k = s_{i_k}$ otherwise.
Note that $w'_1 = w'$, since this is exactly obtained by removing the letters $s_{j_1}, \ldots, s_{j_m}$.

Define the map $T_k$ for $k = 1, \ldots, n$ as follows: if $\tilde{s}_k = s_{i_k}$ we set $T_k = \wgc_{i_k}$, while if $\tilde{s}_k = 1$ we set $T_k = \tilde{F}^{a_k}_{i_k}$ where $a_k = (w_{k + 1} \lambda, \alpha_{i_k}^\vee)$ (the discussion below shows that these numbers are non-negative).
We claim that $T_1 \cdots T_n (b_\lambda \otimes b_{\lambda'}) = b_{w \lambda} \otimes b_{w' \lambda'}$, which would prove that $b_{w \lambda} \otimes b_{w' \lambda'}$ belongs to the Cartan component, since each $T_k$ consists of Kashiwara operators.

The base case $T_n (b_\lambda \otimes b_{\lambda'}) = b_{w_n \lambda} \otimes b_{w'_n \lambda'}$ follows immediately from \cref{cor:action-F-weyl}.
Now suppose that $T_{k + 1} \cdots T_n (b_\lambda \otimes b_{\lambda'}) = b_{w_{k + 1} \lambda} \otimes b_{w'_{k + 1} \lambda'}$ for some $k + 1$.
Observe that $\ell(s_{i_k} w_{k + 1}) > \ell(w_{k + 1})$ and $\ell(s_{i_k} w'_{k + 1}) > \ell(w'_{k + 1})$, where the first follows from the fact that we chose a reduced decomposition for $w$, while the second follows from \cref{lem:reduced-removal}.
Note that the first identity also shows that the numbers $a_k = (w_{k + 1} \lambda, \alpha_{i_k}^\vee)$ are non-negative.
Then applying \cref{cor:action-F-weyl} once again leads to $T_k \cdots T_n (b_\lambda \otimes b_{\lambda'}) = b_{w_k \lambda} \otimes b_{w'_k \lambda'}$, which proves the result.
\end{proof}

\subsection{Consequences for paths}

We now investigate the consequences of \cref{prop:cartan-bruhat} for paths in the higher-rank graph $\hrg$, beginning with the following general result.

\begin{proposition}
\label{prop:paths-bruhat}
Suppose $w \geq w'$ in the Bruhat order. Then, for any $\lambda \in \dwei$, we have that the pair $(b_{w \rho}, b_{w' \lambda})$ is a path in $\hrg$ of color $\lambda$ and range $b_{w \rho}$.
\end{proposition}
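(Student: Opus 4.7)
The plan is to verify, one by one, the clauses from \cref{def:higher-rank-g} that $(b_{w \rho}, b_{w' \lambda})$ must satisfy in order to qualify as a path in $\hrg$. Concretely, I need to check that the first entry is a vertex, that the pair satisfies the Cartan-component compatibility condition, and then read off the color and the range directly from the definition.

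That $b_{w \rho}$ is a vertex of $\hrg$ follows from \cref{prop:weyl-vertex}, together with the notational convention established immediately after it identifying $\Rend(b_{w \rho})$ with $b_{w \rho}$. The crucial step is the compatibility condition: taking $c := b_{w \rho}$, we must verify that $c \otimes b_{w' \lambda}$ lies in the Cartan component of $\calB(\rho) \otimes \calB(\lambda)$. This is exactly \cref{prop:cartan-bruhat} applied with weights $(\rho, \lambda)$ and Weyl group elements $(w, w')$, whose hypothesis $w \geq w'$ is precisely the one we are assuming. Once compatibility is in hand, the color $\lambda$ and the range $b_{w \rho}$ are immediate from the definitional clauses $d(v, b) = \lambda$ for $b \in \calB(\lambda)$ and $r(v, b) = v$.

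The main obstacle is essentially nil at this stage, since the real technical work has already been carried out in \cref{prop:cartan-bruhat}; the statement is a direct translation of that proposition into the path language of $\hrg$. The only small care needed is in the notational identification $\Rend(b_{w \rho}) \leftrightarrow b_{w \rho}$, so that the abstract definition of a vertex is compatible with how the range is written in the claim.
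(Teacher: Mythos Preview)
Your proof is correct and follows essentially the same approach as the paper: both unpack \cref{def:higher-rank-g}, invoke \cref{prop:cartan-bruhat} with $(\rho,\lambda)$ and $(w,w')$ to verify the Cartan-component compatibility, and then read off the color and range directly from the definition. The paper's version is slightly terser, but the content is identical.
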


\begin{proof}
By \cref{def:higher-rank-g}, the pair $(v, b)$ with $b \in \calB(\lambda)$ is a path in $\hrg$ if, choosing $c \in \calB(\rho)$ such that $v = \Rend(c)$, we have that $c \otimes b$ is in the Cartan component of $\calB(\rho) \otimes \calB(\lambda)$.
Consider $(b_{w \rho}, b_{w' \lambda})$, where we identify $b_{w \rho}$ with its right end as usual.
Since $w \geq w'$, it follows from \cref{prop:cartan-bruhat} that $b_{w \rho} \otimes b_{w' \lambda}$ is in the Cartan component of $\calB(\rho) \otimes \calB(\lambda)$, which shows that it is a path. It is immediate from the definitions that is it of color $\lambda$ and range $b_{w \rho}$.
\end{proof}

Determining the sources of such paths is a more complicated matter.
To proceed, we introduce the following notions (which are more or less standard).

\begin{definition}
Let $\gamma$ be a positive root and write $\gamma = \sum_{i \in I} c_i \alpha_i$ in terms of the simple roots.
Then its \emph{root support} is defined as the set $\suppr(\gamma) := \{ i \in I : c_i \neq 0 \}$.

Similarly, let $\lambda$ be a dominant weight and write $\lambda = \sum_{i \in I} c_i' \omega_i$ in terms of the fundamental weights.
Then its \emph{weight support} is defined as the set $\suppw(\lambda) := \{ i \in I : c_i' \neq 0 \}$.
\end{definition}

\begin{remark}
The two distinct notations are used to avoid ambiguity in the case of positive roots which are also dominant weights (although the use should be clear from the context).
\end{remark}

It follows immediately from the definition that $(\omega_i, \gamma) > 0$ for $i \in \suppr(\gamma)$ and is zero otherwise.
Similarly $(\lambda, \alpha_i) > 0$ for $i \in \suppw(\lambda)$ and is zero otherwise.

These two notions of support can be used to describe two cases in which we can compute the right ends easily, as illustrated in the next result.

\begin{lemma}
\label{lem:source-end}
Let $w \in W$ and $t \in T$. Suppose that $\ell(w t) > \ell(w)$.
Denote by $\gamma$ be the positive root corresponding to $t$.
Then for any $\lambda \in \dwei$ we have
\[
\Rend_i (b_{w t \omega_i} \otimes b_{w \lambda}) = b_{w \omega_i},
\]
provided that either $i \notin \suppr(\gamma)$ or $i \in \suppw(\lambda)$.
\end{lemma}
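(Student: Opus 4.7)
The plan is as follows. The first step is to check that the claimed equality even makes sense, i.e.\ that $b_{wt\omega_i} \otimes b_{w\lambda}$ lies in the Cartan component of $\calB(\omega_i) \otimes \calB(\lambda)$, so that $\Rend_i$ applies. Since $\ell(wt) > \ell(w)$, the definition of the Bruhat order (via right multiplication by a reflection) gives $wt > w$, and \cref{prop:cartan-bruhat} applied to the pair $(wt, w)$ furnishes exactly this Cartan membership. After this preliminary, the argument splits according to the two hypotheses.

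If $i \notin \suppr(\gamma)$, then $(\omega_i, \gamma) = 0$, so the reflection $t = s_\gamma$ fixes $\omega_i$ and hence $b_{wt\omega_i} = b_{w\omega_i}$. Now \cref{lem:cartan-extremal} computes the Cartan braiding directly as $\sigma_{\calB(\omega_i), \calB(\lambda)}(b_{w\omega_i} \otimes b_{w\lambda}) = b_{w\lambda} \otimes b_{w\omega_i}$, and \cref{prop:computation-rightend} (taken with $n = 2$) identifies $\Rend_i$ with the rightmost tensor factor, giving $b_{w\omega_i}$. If instead $i \in \suppw(\lambda)$, the weight $\mu := \lambda - \omega_i$ is dominant, and the strategy is to pass to the refined tensor product $\calB(\omega_i) \otimes \calB(\mu) \otimes \calB(\omega_i)$. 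Applying \cref{prop:cartan-bruhat} with $w = w'$ shows that $b_{w\mu} \otimes b_{w\omega_i}$ lies in the Cartan component of $\calB(\mu) \otimes \calB(\omega_i)$, and being of extremal weight $w\lambda$ it must equal the image of $b_{w\lambda}$ under the Cartan embedding $\calB(\lambda) \hookrightarrow \calB(\mu) \otimes \calB(\omega_i)$. Tensoring on the left with $\calB(\omega_i)$, the element $b_{wt\omega_i} \otimes b_{w\lambda}$ is thus identified with $b_{wt\omega_i} \otimes b_{w\mu} \otimes b_{w\omega_i}$ inside the Cartan component $\calB(\omega_i + \lambda)$ of the triple product. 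Regrouping the triple as $[\calB(\omega_i) \otimes \calB(\mu)] \otimes \calB(\omega_i)$, the very definition of $\Rend_{\omega_i}$ via the Cartan embedding $\calB(\omega_i + \lambda) \hookrightarrow \calB(\omega_i + \mu) \otimes \calB(\omega_i)$ then reads off the rightmost tensor factor, namely $b_{w\omega_i}$, as claimed.

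The only serious input is \cref{prop:cartan-bruhat}, which is used twice: once to obtain the initial Cartan membership, and again in the second case to identify $b_{w\lambda}$ with $b_{w\mu} \otimes b_{w\omega_i}$ under the decomposition of $\calB(\lambda)$. The mild subtlety, and the only place where any choice is made, is recognising that the two hypotheses call for different handling: in the first case one performs a genuine Cartan-braiding computation supplied by \cref{lem:cartan-extremal}, whereas in the second case one bypasses the braiding entirely by extracting an extra $\calB(\omega_i)$ factor on the right, so that $\Rend_{\omega_i}$ can be read directly from its definition.
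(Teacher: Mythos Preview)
Your proof is correct. The first case ($i \notin \suppr(\gamma)$) is handled exactly as in the paper: $t$ fixes $\omega_i$, and then \cref{lem:cartan-extremal} together with \cref{prop:computation-rightend} (for $n=2$) gives the result.

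In the second case ($i \in \suppw(\lambda)$), however, your route genuinely differs from the paper's. The paper embeds $\calB(\lambda) \hookrightarrow \calB(\omega_i) \otimes \calB(\lambda - \omega_i)$ with the $\omega_i$ factor on the \emph{left}, obtains the triple $b_{wt\omega_i} \otimes b_{w\omega_i} \otimes b_{w(\lambda - \omega_i)}$, and then applies \cref{prop:computation-rightend}: one must compute $\sigma_2\sigma_1$, using that the Cartan braiding on $\calB(\omega_i)^{\otimes 2}$ is the identity (for $\sigma_1$, after invoking \cref{prop:cartan-bruhat} to ensure Cartan membership) and \cref{lem:cartan-extremal} (for $\sigma_2$). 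You instead place the $\omega_i$ factor on the \emph{right}, giving the triple $b_{wt\omega_i} \otimes b_{w\mu} \otimes b_{w\omega_i}$, and then compare the two Cartan embeddings of $\calB(\omega_i + \lambda)$ into the triple product: one through $\calB(\omega_i) \otimes \calB(\lambda)$, the other through $\calB(\lambda) \otimes \calB(\omega_i)$. Uniqueness of the Cartan inclusion forces them to agree, so the rightmost factor (which in the second chain is by definition $\Rend_i$) must be $b_{w\omega_i}$. Your argument thus bypasses the explicit braiding computation entirely, at the price of leaning on the uniqueness of Cartan embeddings into an iterated tensor product; the paper's version is more computational but stays closer to the concrete recipe of \cref{prop:computation-rightend}.
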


\begin{proof}
Recall that the reflection $t = t_\gamma$ acts on a weight $\mu$ by $t \mu = \mu - (\mu, \gamma^\vee) \gamma$.
Hence we get $t \omega_i = \omega_i$ if $i \notin \suppr(\gamma)$.
Also recall that the Cartan braiding coincides with the flip map for elements in the orbit of the Weyl group by \cref{lem:cartan-extremal}.
Using these facts we get
\[
\Rend_i (b_{w t \omega_i} \otimes b_{w \lambda})
= \Rend_i (b_{w \omega_i} \otimes b_{w \lambda}) = b_{w \omega_i}.
\]

Now consider the case $i \in \suppw(\lambda)$.
Then we have a unique morphism of crystals
\[
\calB(\lambda) \hookrightarrow \calB(\omega_i) \otimes \calB(\lambda - \omega_i),
\]
with $\calB(\lambda)$ being the Cartan component of the tensor product.
Under this identification, the extremal element $b_{w \lambda}$ is mapped to $b_{w \omega_i} \otimes b_{w (\lambda - \omega_i)}$.
According to \cref{prop:computation-rightend}, the right end $\Rend_i(b_{w t \omega_i} \otimes b_{w \lambda})$ can be obtained as the rightmost factor of the expression
\[
\sigma_2 \sigma_1 (b_{w t \omega_i} \otimes b_{w \omega_i} \otimes b_{w (\lambda - \omega_i)}).
\]
The assumption $\ell(w t) > \ell(w)$ guarantees that $b_{w t \omega_i} \otimes b_{w \omega_i}$ is in the Cartan component of $\calB(\omega_i) \otimes \calB(\omega_i)$, by \cref{prop:cartan-bruhat}.
We also recall that the Cartan braiding is the identity in this case (the identity clearly being a morphism of crystals here).
Then we compute
\[
\sigma_2 \sigma_1 (b_{w t \omega_i} \otimes b_{w \omega_i} \otimes b_{w (\lambda - \omega_i)})
= \sigma_2 (b_{w t \omega_i} \otimes b_{w \omega_i} \otimes b_{w (\lambda - \omega_i)})
= b_{w t \omega_i} \otimes b_{w (\lambda - \omega_i)} \otimes b_{w \omega_i}.
\]
We note that the last step follows from the fact that both elements are in the orbit of $w$, using \cref{lem:cartan-extremal}.
From this we conclude that $\Rend_i (b_{w t \omega_i} \otimes b_{w \lambda}) = b_{w \omega_i}$.
\end{proof}

The lemma above allows us to easily compute the sources of certain paths.

\begin{corollary}
\label{cor:source-edge}
With notation as in \cref{lem:source-end}, suppose that $\suppr(\gamma) \subseteq \suppw(\lambda)$. Then we have that $(b_{w t \rho}, b_{w \lambda})$ is a path with source $b_{w \rho}$ and range $b_{w t \rho}$.
\end{corollary}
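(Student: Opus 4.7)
The plan is to verify the three pieces of the statement directly from \cref{def:higher-rank-g}: that $(b_{w t \rho}, b_{w \lambda})$ is a path, that its range is $b_{w t \rho}$, and that its source is $b_{w \rho}$. The first and second are essentially immediate, while the third is where \cref{lem:source-end} is put to work via the hypothesis $\suppr(\gamma) \subseteq \suppw(\lambda)$.

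First I would note that the assumption $\ell(w t) > \ell(w)$ (together with $t \in T$) means precisely that $w t > w$ in the strong Bruhat order, i.e.\ there is an edge $w \to w t$ in $\bruhat$. Hence \cref{prop:cartan-bruhat}, applied to the pair $(w t, w)$ with weights $(\rho, \lambda)$, guarantees that $b_{w t \rho} \otimes b_{w \lambda}$ lies in the Cartan component of $\calB(\rho) \otimes \calB(\lambda)$. Identifying $b_{w t \rho}$ with its right end as in the discussion following \cref{prop:weyl-vertex}, this is exactly the compatibility condition in \cref{def:higher-rank-g} needed for $(b_{w t \rho}, b_{w \lambda})$ to be a path of color $\lambda$, and by construction its range is $b_{w t \rho}$.

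For the source, I would use the alternative formula from \cref{prop:source-identity}, which is spelled out right after \cref{def:higher-rank-g}: writing the vertex as $b_{w t \rho}$ corresponds to $c = b_{w t \rho} = b_{w t \omega_1} \otimes \cdots \otimes b_{w t \omega_r}$ with right end $(c_1, \ldots, c_r) = (b_{w t \omega_1}, \ldots, b_{w t \omega_r})$, so that the $i$-th component of $s(e)$ is $\Rend_i(b_{w t \omega_i} \otimes b_{w \lambda})$. I then apply \cref{lem:source-end} componentwise: for each $i \in I$ one of its two alternatives must hold, since either $i \notin \suppr(\gamma)$ (and the first hypothesis of the lemma applies) or $i \in \suppr(\gamma)$, in which case the inclusion $\suppr(\gamma) \subseteq \suppw(\lambda)$ forces $i \in \suppw(\lambda)$ and the second hypothesis applies. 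In either case \cref{lem:source-end} gives $\Rend_i(b_{w t \omega_i} \otimes b_{w \lambda}) = b_{w \omega_i}$. Assembling these components yields $s(e) = (b_{w \omega_1}, \ldots, b_{w \omega_r})$, which under the standing identification is $b_{w \rho}$.

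There is no real obstacle here: the role of the corollary is just to package \cref{prop:cartan-bruhat} (which produces the path) together with \cref{lem:source-end} (which identifies the source) in the specific setup of a single Bruhat edge $w \to w t$. The only point requiring any care is making sure that the dichotomy in the hypothesis of \cref{lem:source-end} genuinely covers every index $i$, which is precisely what the support condition $\suppr(\gamma) \subseteq \suppw(\lambda)$ is designed to guarantee.
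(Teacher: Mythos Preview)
Your proposal is correct and follows essentially the same approach as the paper: both compute the source componentwise via the formula $\Rend_i(b_{w t \omega_i} \otimes b_{w \lambda})$ and then invoke \cref{lem:source-end} using the dichotomy ``either $i \notin \suppr(\gamma)$ or $i \in \suppw(\lambda)$'' guaranteed by the support inclusion. You are slightly more explicit than the paper in justifying that the pair is a path (you appeal to \cref{prop:cartan-bruhat} directly, whereas the paper tacitly relies on \cref{prop:paths-bruhat}), but this is a cosmetic difference.
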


\begin{proof}
It is clear that its range is given by $b_{w t \rho}$.
To determine its source, we need to compute
\[
\Rend(b_{w t \rho} \otimes b_{w \lambda}) = \left( \Rend_1(b_{w t \omega_1} \otimes b_{w \lambda}), \ldots, \Rend_r(b_{w t \omega_r} \otimes b_{w \lambda}) \right).
\]
By \cref{lem:source-end} we know that $\Rend_i (b_{w t \omega_i} \otimes b_{w \lambda}) = b_{w \omega_i}$ if $i \notin \suppr(\gamma)$ or $i \in \suppw(\lambda)$.
Then the assumption $\suppr(\gamma) \subseteq \suppw(\lambda)$ guarantees that this is the case for all $i \in I$. Hence
\[
\left( \Rend_1(b_{w t \omega_1} \otimes b_{w \lambda}), \ldots, \Rend_r(b_{w t \omega_r} \otimes b_{w \lambda}) \right)
= (b_{w \omega_1}, \ldots, b_{w \omega_r}) \equiv b_{w \rho}. \qedhere
\]
\end{proof}

\section{Embedding the Bruhat graph}
\label{sec:embedding}

We are now going to use the results of the previous section to discuss in what sense the Bruhat graph can be embedded in the higher-rank graph $\hrg$.
We treat separately the cases of the weak Bruhat graphs and the (strong) Bruhat graph (as well as discussing how we can regard them as colored graphs).
We show that the right weak Bruhat graph embeds naturally in the skeleton of $\hrg$, unlike the left one.
For the Bruhat graph we necessarily have to look for embeddings into the whole $\hrg$, not just its skeleton, and we show that many such embeddings are possible.
Finally we illustrate and discuss these results in the case of $\lieg = A_2$.

\subsection{The weak Bruhat graphs}

Let us begin by discussing the case of the right weak Bruhat graph $\bruhatR$, whose edges are of the form $w \to w s_i$ with $\ell(w s_i) > \ell(w)$.

We can make $\bruhatR$ into a colored graph with colors $I = \{ 1, \ldots, r \}$ in a natural way, by assigning the color $i$ to each edge of the form $w \to w s_i$ (this should be seen as corresponding to the right multiplication in the Weyl group by the simple reflection $s_i$).
Similarly, we can regard the skeleton of the higher-rank graph $\hrg$ as a colored graph with colors $\{ \omega_1, \ldots, \omega_r \}$.
In the following we identify these sets of colors by $k \equiv \omega_k$.

As previously discussed in \cref{prop:weyl-vertex}, we have an inclusion
\[
\iota_W : W \to \hrg, \quad
w \mapsto b_{w \rho}
\]
of the Weyl group $W$ into the vertex set of $\hrg$.
Here as usual we make no distinction between the crystal element $b_{w \rho} \in \calB(\rho)$ and its right end in $\hrg$.

Hence we look for embeddings of the colored graph $\bruhatR$ into the skeleton of $\hrg$ which are compatible with $\iota_W$, meaning that the vertices are embedded using $\iota_W$.

\begin{theorem}
\label{thm:embedding-rightweak}
There is a unique embedding (of colored graphs) of the right weak Bruhat graph $\bruhatR$ into the skeleton of $\hrg$ which is compatible with the inclusion $\iota_W$.

It sends the edge $w \to w s_i$ of $\bruhatR$ to the edge $(b_{w s_i \rho}, b_{w \omega_i})$ of $\hrg$.
\end{theorem}

\begin{proof}
First we show the existence of such an embedding.
For the vertices this is given by $\iota_W$, that is $w \mapsto b_{w \rho}$.
Next, we claim that $(b_{w s_i \rho}, b_{w \omega_i})$ is an edge in $\hrg$ of color $\omega_i$, source $b_{w \rho}$ and range $b_{w s_i \rho}$. This follows from \cref{cor:source-edge} since, using the notation employed there, we have $\gamma = \alpha_i$, $\lambda = \omega_i$ and $\suppr(\alpha_i) = \suppw(\omega_i) = \{ i \}$.
This shows the existence of the required embedding of colored graphs, so all that is left is to prove uniqueness.

Clearly an edge $w \to w s_i$ in $\bruhatR$ must be mapped to an edge of color $\omega_i$, source $b_{w \rho}$ and range $b_{w s_i \rho}$ in $\hrg$. Then such an edge must be of the form $(b_{w s_i \rho}, b)$ with $b \in \calB(\omega_i)$, since the vertex in this pair must be equal to the range, and the crystal of $b$ determines the color.
The source of this edge can be computed by determining the right ends $\Rend_k(b_{w s_i \omega_k} \otimes b)$ for $k = 1, \ldots, r$.
Consider $k = i$, so that $b_{w s_i \omega_i} \otimes b$ must be in the Cartan component of $\calB(\omega_i) \otimes \calB(\omega_i)$. In this case the Cartan braiding is the identity, from which we see that $\Rend_i(b_{w s_i \omega_i} \otimes b) = b$.
But, in order for the source to be $b_{w \rho}$, we must have $\Rend_i(b_{w s_i \omega_i} \otimes b) = b_{w \omega_i}$.
This forces $b = b_{w \omega_i}$ and hence uniqueness of the edge in the skeleton of $\hrg$.
\end{proof}

\begin{figure}[h]
\centering

\begin{tikzpicture}[
scale = 1.5,
vertex/.style = {align=center, inner sep=2pt},
Rarr/.style = {->, red},
Barr/.style = {->, blue, dashed},
shadow/.style = {white, line width=3pt},
Rloop/.style = {->, red, out=165, in=195, loop},
Bloop/.style = {->, blue, out=15, in=-15, loop, dashed}
]
\node (v1) at ( 0, 0) [vertex] {$1$};
\node (v2) at (-2,-1) [vertex] {$s_2$};
\node (v3) at ( 2,-1) [vertex] {$s_1$};
\node (v4) at (-2,-2) [vertex] {$s_1 s_2$};
\node (v5) at ( 2,-2) [vertex] {$s_2 s_1$};
\node (v6) at ( 0,-3) [vertex] {$s_1 s_2 s_1 = s_2 s_1 s_2$};

\draw [Barr] (v1)--(v2);
\draw [Rarr] (v1)--(v3);
\draw [Rarr] (v2)--(v4);
\draw [Barr] (v3)--(v5);
\draw [Barr] (v4)--(v6);
\draw [Rarr] (v5)--(v6);
\end{tikzpicture}
\qquad
\begin{tikzpicture}[
scale = 1.5,
vertex/.style = {align=center, inner sep=2pt},
Rarr/.style = {->, red},
Barr/.style = {->, blue, dashed},
shadow/.style = {white, line width=3pt},
Rloop/.style = {->, red, out=165, in=195, loop},
Bloop/.style = {->, blue, out=15, in=-15, loop, dashed}
]
\node (v1) at ( 0, 0) [vertex] {$1$};
\node (v2) at (-2,-1) [vertex] {$s_2$};
\node (v3) at ( 2,-1) [vertex] {$s_1$};
\node (v4) at (-2,-2) [vertex] {$s_1 s_2$};
\node (v5) at ( 2,-2) [vertex] {$s_2 s_1$};
\node (v6) at ( 0,-3) [vertex] {$s_1 s_2 s_1 = s_2 s_1 s_2$};

\draw [Barr] (v1)--(v2);
\draw [Rarr] (v1)--(v3);
\draw [Rarr] (v2)--(v5);
\draw [shadow] (v3)--(v4);
\draw [Barr] (v3)--(v4);
\draw [Rarr] (v4)--(v6);
\draw [Barr] (v5)--(v6);
\end{tikzpicture}
    
\caption{The left and right weak Bruhat graphs for the Weyl group $S_3$, considered as colored graphs. Here the red solid edges correspond to the color $1$, while the blue dashed edges correspond to the color $2$.}
\label{fig:weak-bruhat-S3}
\end{figure}

The situation turns out to be fairly different for the left weak Bruhat graph.

\begin{proposition}
\label{prop:embedding-leftweak}
In general there is no embedding (of colored graphs) of the left weak Bruhat graph $\bruhatL$ into the skeleton of $\hrg$ which is compatible with the inclusion $\iota_W$.
\end{proposition}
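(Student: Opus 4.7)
The plan is to exhibit a counterexample for $\lieg = A_2$, using a uniqueness-style analysis analogous to the proof of \cref{thm:embedding-rightweak}. Assuming an embedding $\iota$ exists, I would first argue that on Weyl group vertices $\iota$ must coincide with the natural map $w \mapsto b_{w \rho}$, by the same considerations as in the right weak case. Then for each edge $w \to s_i w$ in $\bruhatL$, the image in $\hrg$ must be a color-$\omega_i$ edge with range $b_{s_i w \rho}$, hence of the form $(b_{s_i w \rho}, b)$ for some $b \in \calB(\omega_i)$. The constraint that the source's $i$-th component equal $b_{w \omega_i}$, together with the fact that the Cartan braiding acts as the identity on $\calB(2 \omega_i) \subset \calB(\omega_i)^{\otimes 2}$, forces $b = b_{w \omega_i}$. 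So the unique candidate image is $(b_{s_i w \rho}, b_{w \omega_i})$.

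The remaining check is that the $j$-th component of the source, for $j \neq i$, agrees with $b_{w \omega_j}$. By \cref{prop:source-identity}, this amounts to computing $\Rend_j(b_{s_i w \omega_j} \otimes b_{w \omega_i})$. Unlike the right weak case handled by \cref{cor:source-edge}, where $s_i$ fixes $\omega_j$ for $j \neq i$ and the identity $b_{w s_i \omega_j} = b_{w \omega_j}$ trivializes the issue, here $s_i w \omega_j$ and $w \omega_j$ may differ genuinely, and \cref{lem:source-end} only guarantees the identity when $j \notin \suppr(w^{-1} \alpha_i)$.

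The main step is to pinpoint a concrete failing edge. I would take $\lieg = A_2$ and consider the edge $s_1 s_2 \to w_0$ in $\bruhatL$ of color $i = 2$, where $w_0 = s_1 s_2 s_1 = s_2 s_1 s_2$. A direct calculation shows that $s_1 s_2 \omega_2 = -\omega_1 = w_0 \omega_2$, so $b_{s_1 s_2 \omega_2} = b_{w_0 \omega_2}$ and the candidate edge becomes $(b_{w_0 \rho}, b_{w_0 \omega_2})$. Applying \cref{lem:cartan-extremal} with $w = w_0$ yields
\[
\sigma_{\calB(\omega_1), \calB(\omega_2)}(b_{w_0 \omega_1} \otimes b_{w_0 \omega_2}) = b_{w_0 \omega_2} \otimes b_{w_0 \omega_1},
\]
and hence \cref{prop:computation-rightend} gives $\Rend_1(b_{w_0 \omega_1} \otimes b_{s_1 s_2 \omega_2}) = b_{w_0 \omega_1}$. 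For a valid embedding this component would have to equal $b_{s_1 s_2 \omega_1}$, but $w_0 \omega_1 = -\omega_2$ while $s_1 s_2 \omega_1 = -\omega_1 + \omega_2$, a clear contradiction. The main obstacle is really locating this edge: all lower edges in the $A_2$ Bruhat graph do embed correctly by direct verification, and it is only at the top level (the two edges into $w_0$) where the mismatch between $s_i w$ and $w$ on the non-$i$ fundamental weights becomes irreconcilable.
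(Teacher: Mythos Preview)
Your proof is correct and uses the same counterexample as the paper: the edge $s_1 s_2 \to w_0$ of color $2$ in $\bruhatL$ for $\lieg = A_2$. The paper's proof is minimal---it simply points to the precomputed skeleton in \cref{fig:skeleton-graph-sl3} and observes there is no $\omega_2$-edge between these two vertices. Your argument is more self-contained: you determine the unique candidate edge $(b_{w_0\rho}, b_{s_1 s_2 \omega_2})$ via the $i$-th component constraint (exactly as in the uniqueness part of \cref{thm:embedding-rightweak}), recognize that $s_1 s_2 \omega_2 = w_0 \omega_2$, and then use \cref{lem:cartan-extremal} to compute the first component of its source directly, obtaining $b_{w_0\omega_1} \neq b_{s_1 s_2 \omega_1}$. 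This is arguably preferable since it does not rely on an externally computed figure, but the underlying idea is identical.

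One minor remark: your claim that the vertex map must be $w \mapsto b_{w\rho}$ ``by the same considerations as in the right weak case'' is not actually proved in \cref{thm:embedding-rightweak} either---there the vertex map is simply asserted and only uniqueness of the \emph{edge} map is argued. The paper's own proof of \cref{prop:embedding-leftweak} makes the same implicit assumption, so you are consistent with the paper's conventions, but this is worth being aware of.
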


\begin{proof}
Consider the left weak Bruhat graph of $W = S_3$, which is depicted in \cref{fig:weak-bruhat-S3}.
We have an edge of color $2$ from $s_1 s_2$ to $s_1 s_2 s_1 = s_2 s_1 s_2$.
On the other hand, the skeleton of $\hrg$ depicted in \cref{fig:skeleton-graph-sl3} shows that there is no edge of color $\omega_2$ between those two vertices.
\end{proof}

\begin{remark}
The asymmetry between the left and right weak Bruhat orders can be related to the fact that the vertices of $\hrg$ are defined using right ends (instead of left ends).
\end{remark}

In the next subsection we are going to consider the question of embedding the (strong) Bruhat graph $\bruhat$ into the \emph{whole} $\hrg$, as opposed to just its skeleton.
Since $\bruhatL$ is a subgraph of $\bruhat$, the results will have implications for embeddings of $\bruhatL$ as well.

\subsection{The Bruhat graph}

Before we begin considering the question of embedding the (strong) Bruhat graph $\bruhat$ into $\hrg$, we must discuss how to make the former into a colored graph.
Here there is no canonical way of doing so, unlike in the weak case.
For this reason we are going to consider some general choices of coloring, described as follows.

\begin{definition}
\label{def:compatible-coloring}
Let $\coloring$ be a function from the edge set of $\bruhat$ to the dominant weights $\dwei$.
We say that $\coloring$ is a \emph{compatible coloring} if the following holds: given an edge $e: w  \to w t$, with $t$ corresponding to the positive root $\gamma$, we have that $\suppr(\gamma) \subseteq \suppw(\coloring(e))$.
\end{definition}

More concretely, if $\gamma = \sum_{i \in I} c_i \alpha_i$ and $\coloring(e) = \sum_{i \in I} c_i' \omega_i$, we ask that $c_i \neq 0$ implies $c_i' \neq 0$.
Any such choice of compatible coloring gives an embedding, as we now show.

\begin{theorem}
\label{thm:embedding-bruhat}
Consider the Bruhat graph $\bruhat$ as a colored graph via a compatible coloring $\coloring$.
Then there exists an embedding of $\bruhat$ into $\hrg$ given as follows:
\begin{itemize}
\item each vertex $w \in W$ is sent to the (right end of the) crystal element $b_{w \rho}$, 
\item each edge $e: w \to w t$ is sent to the path $(b_{w t \rho}, b_{w \lambda})$ with $\lambda = \coloring(e)$.
\end{itemize}
\end{theorem}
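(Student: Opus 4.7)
The plan is to verify that the prescribed assignments define an embedding of colored directed graphs, by assembling the technical results already established in \cref{sec:technical-results}. Concretely, there are four conditions to check: injectivity of the vertex map, that each edge maps to a valid path in $\hrg$ of the prescribed color, that source and range are preserved, and injectivity of the edge map.

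First I will note that the vertex map $w \mapsto b_{w \rho}$ is injective and lands in the vertex set of $\hrg$ by \cref{prop:weyl-vertex}. For the edge map, I will fix an edge $e: w \to w t$ in $\bruhat$, let $\gamma$ be the positive root corresponding to $t$, and set $\lambda := \coloring(e)$. Since $\ell(w t) > \ell(w)$, one has $w t \geq w$ in the Bruhat order, so \cref{prop:paths-bruhat} applied to this pair shows that $(b_{w t \rho}, b_{w \lambda})$ is a path in $\hrg$ of color $\lambda$ with range $b_{w t \rho}$; color preservation is built in by construction, and the range matches the image of the range $w t$ of $e$. For the source, the compatibility hypothesis $\suppr(\gamma) \subseteq \suppw(\lambda)$ is exactly what is needed to invoke \cref{cor:source-edge}, which then gives source $b_{w \rho}$, matching the image of the source $w$ of $e$.

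Finally, injectivity of the edge map will follow immediately: an edge of $\bruhat$ is determined by its source $w$ and range $w t$ (the reflection being forced as $t = w^{-1}(w t)$), so distinct edges have distinct source-range pairs in $\bruhat$, and these are faithfully recorded by the already-injective vertex map. There is no substantive obstacle here: the theorem essentially packages together \cref{prop:paths-bruhat} and \cref{cor:source-edge}. The compatible coloring hypothesis is used in exactly one place, namely to invoke \cref{cor:source-edge} in the computation of the source, which also makes clear why dropping compatibility would break the argument.
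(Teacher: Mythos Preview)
Your proposal is correct and follows essentially the same approach as the paper: both arguments reduce the claim to the technical results of \cref{sec:technical-results}, with the compatible-coloring hypothesis entering precisely to invoke \cref{cor:source-edge} (the paper cites \cref{lem:source-end} directly and re-derives the conclusion of \cref{cor:source-edge}, but this is the same content). Your write-up is in fact slightly more thorough than the paper's, which does not explicitly verify injectivity on vertices or edges.
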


\begin{proof}
Let $\gamma$ be the positive root corresponding to the reflection $t$.
By \cref{lem:source-end} we have that $\Rend_i (b_{w t \omega_i} \otimes b_{w \mu}) = b_{w \omega_i}$ if $i \notin \suppr(\gamma)$ or $i \in \suppw(\mu)$, for any dominant weight $\mu$.
The condition in the definition of $\coloring$ guarantees that $\suppr(\gamma) \subseteq \suppw(\coloring(e))$.
Then we have $\suppr(\gamma) \subseteq \suppw(\lambda)$, which guarantees that $\Rend_i (b_{w t \omega_i} \otimes b_{w \lambda}) = b_{w \omega_i}$ for any $i \in I$.
From this it is immediate to see that $(b_{w t \rho}, b_{w \lambda})$ is a path from $b_{w \rho}$ to $b_{w t \rho}$ of color $\coloring(e)$, as required.
\end{proof}

Further conditions can potentially be imposed on this embedding to select a "natural" candidate, such as requiring some form of compatibility with the composition of edges in the skeleton of $\hrg$ corresponding to $\bruhatR$.
We leave such questions for future research.

\subsection{Example of $A_2$}
\label{sec:example-a2}

We illustrate some of the features of the previous results in the example of the simple Lie algebra $\lieg = A_2 = \mathfrak{sl}_3$ of rank $2$. In this case the Weyl group is $W = S_3$ and the corresponding Bruhat graph is depicted in \cref{fig:bruhat-s3}.

\begin{figure}[h]
\centering

\begin{tikzpicture}[
scale = 1.5,
vertex/.style = {align=center, inner sep=2pt},
arr/.style = {->, black},
shadow/.style = {white, line width=3pt}
]
\node (v1) at ( 0, 0) [vertex] {$1$};
\node (v2) at (-2,-1) [vertex] {$s_2$};
\node (v3) at ( 2,-1) [vertex] {$s_1$};
\node (v4) at (-2,-2) [vertex] {$s_1 s_2$};
\node (v5) at ( 2,-2) [vertex] {$s_2 s_1$};
\node (v6) at ( 0,-3) [vertex] {$s_1 s_2 s_1 = s_2 s_1 s_2$};

\draw [arr] (v1)--(v2);
\draw [arr] (v1)--(v3);
\draw [shadow] (v1)--(v6);
\draw [arr] (v1)--(v6);

\draw [arr] (v2)--(v4);
\draw [arr] (v2)--(v5);

\draw [arr] (v3)--(v4);
\draw [arr] (v3)--(v5);

\draw [arr] (v4)--(v6);

\draw [arr] (v5)--(v6);
\end{tikzpicture}
\caption{The (strong) Bruhat graph $\bruhat$ for $W = S_3$.}
\label{fig:bruhat-s3}
\end{figure}

There are two fundamental crystals in this case, for which we use the notations
\[
\calB(\omega_1) : a_1 \xrightarrow{1} a_2 \xrightarrow{2} a_3, \quad
\calB(\omega_2) : b_1 \xrightarrow{2} b_2 \xrightarrow{1} b_3.
\]
The vertex set of $\hrg$ consist of $6$ elements, which can be identified with the elements of the Weyl group $S_3$ (see also the next section).
The skeleton of $\hrg$ is depicted in \cref{fig:skeleton-graph-sl3}, but we remark that we have removed the loops from each vertex for readability.
This skeleton is derived in \cite[Example 7.9]{matassa-yuncken} (but recall the opposite conventions as in \cref{rem:conventions-hrg}).

\begin{figure}[h]
\centering
    
\begin{tikzpicture}[
scale = 2,
vertex/.style = {align=center, inner sep=2pt},
Rarr/.style = {->, red},
Barr/.style = {->, blue, dashed},
shadow/.style = {white, line width=3pt},
Rloop/.style = {->, red, out=165, in=195, loop},
Bloop/.style = {->, blue, out=15, in=-15, loop, dashed}
]
\node (v1) at ( 0, 0) [vertex] {$(a_1, b_1) \equiv 1$};
\node (v2) at (-2,-1) [vertex] {$(a_1, b_2) \equiv s_2$};
\node (v3) at ( 2,-1) [vertex] {$(a_2, b_1) \equiv s_1$};
\node (v4) at (-2,-2) [vertex] {$(a_2, b_3) \equiv s_1 s_2$};
\node (v5) at ( 2,-2) [vertex] {$(a_3, b_2) \equiv s_2 s_1$};
\node (v6) at ( 0,-3) [vertex] {$(a_3, b_3) \equiv s_1 s_2 s_1 = s_2 s_1 s_2$};

\draw [Barr] (v1)--(v2);
\draw [Rarr] (v1)--(v3);

\draw [Rarr, transform canvas={xshift=-0.3em}] (v2)--(v4);
\draw [Barr, transform canvas={xshift=0.3em}] (v2)--(v4);
\draw [Rarr] (v2)--(v5);
\draw [Rarr] (v2)--(v6);

\draw [shadow] (v3)--(v4);
\draw [Barr] (v3)--(v4);
\draw [shadow] (v3)--(v6);
\draw [Barr] (v3)--(v6);
\draw [Rarr, transform canvas={xshift=-0.3em}] (v3)--(v5);
\draw [Barr, transform canvas={xshift=0.3em}] (v3)--(v5);

\draw [Rarr] (v4)--(v6);
\draw [Barr] (v5)--(v6);
\end{tikzpicture}
    
\caption{The skeleton of the $2$-graph $\hrg$ for $\lieg = A_2$ (with the loops on each vertex removed for readability).
The red solid edges correspond to the color $\omega_1$, while the blue dashed edges correspond to the color $\omega_2$.}
\label{fig:skeleton-graph-sl3}
\end{figure}

First we compare the skeleton to the weak Bruhat graphs appearing in \cref{fig:weak-bruhat-S3}.
We clearly see that the right weak Bruhat graph $\bruhatR$ embeds into the skeleton of $\hrg$, as follows from the general \cref{thm:embedding-rightweak}.
On the other hand, there is no such embedding for the left weak Bruhat graph $\bruhatL$, as also observed in \cref{prop:embedding-leftweak}.
For instance, there is no edge of color $\omega_2$ in \cref{fig:skeleton-graph-sl3} corresponding to the edge of color $2$ from $s_1 s_2$ to $s_2 s_1 s_2$ in \cref{fig:weak-bruhat-S3}.

Some of the remaining edges in \cref{fig:weak-bruhat-S3} can be interpreted in terms of embeddings of the Bruhat graph $\bruhat$, as in \cref{thm:embedding-bruhat}.
However, note that not all edges of this graph appear in the skeleton of $\hrg$, since the latter only contains paths of colors $\omega_1$ and $\omega_2$. For instance, the edge from $1$ to $s_1 s_2 s_1$ of $\bruhat$ can be embedded into a path of color $\omega_1 + \omega_2$.

Even after all these identifications, there remain edges in the skeleton of $\hrg$ that do not correspond to any edges in $\bruhat$ (such as the edge $s_2 \to s_1 s_2 s_1$ of color $\omega_1$).
This can be seen as a consequence of the \emph{factorization property} which defines a higher-rank graph: for instance, considering the path $1 \to s_1 s_2 s_1$ of color $\omega_1 + \omega_2$ mentioned above, it should be possible to (uniquely) decompose it in terms of edges of colors $\omega_1$ and $\omega_2$.
This would not be possible if we did not have some extra edges in the skeleton (such as $s_2 \to s_1 s_2 s_1$).

In this particular example, it is possible to show that all the edges in the skeleton of $\hrg$ can be seen as arising from the result stated in \cref{prop:paths-bruhat}, as we do below.

\begin{remark}
We can check that all the edges in the skeleton of $\hrg$ for $\lieg = A_2$ correspond to $(b_{w \rho}, b_{w' \omega_i})$ with $w \geq w'$, as in \cref{prop:paths-bruhat}.
For $\calB(\omega_1)$ we have the identifications
\[
a_1 = b_{\omega_1}, \quad
a_2 = b_{s_1 \omega_1}, \quad
a_3 = b_{s_2 s_1 \omega_1}.
\]
Then we can easily list all edges $(b_{w \rho}, b_{w' \omega_1})$ with $w \geq w'$. Clearly any such edge has range $b_{w \rho}$, while the corresponding sources can be computed to be
\[
\begin{array}{lll}
s(b_{\rho}, b_{\omega_1}) = b_{\rho}, &
s(b_{s_1 \rho}, b_{\omega_1}) = b_{\rho}, &
s(b_{s_1 \rho}, b_{s_1 \omega_1}) = b_{s_1 \rho}, \\
s(b_{s_2 \rho}, b_{\omega_1}) = b_{s_2 \rho}, &
s(b_{s_1 s_2 \rho}, b_{\omega_1}) = b_{s_2 \rho}, &
s(b_{s_1 s_2 \rho}, b_{s_1 \omega_1}) = b_{s_1 s_2\rho}, \\
s(b_{s_2 s_1 \rho}, b_{\omega_1}) = b_{s_2 \rho}, &
s(b_{s_1 s_2 \rho}, b_{s_1 \omega_1}) = b_{s_1 \rho}, &
s(b_{s_1 s_2 \rho}, b_{s_2 s_1 \omega_1}) = b_{s_2 s_1 \rho}, \\
s(b_{s_1 s_2 s_1 \rho}, b_{\omega_1}) = b_{s_2 \rho}, &
s(b_{s_1 s_2 s_1 \rho}, b_{s_1 \omega_1}) = b_{s_1 s_2 \rho}, &
s(b_{s_1 s_2 s_1 \rho}, b_{s_2 s_1 \omega_1}) = b_{s_1 s_2 s_2 \rho}.
\end{array}
\]
Inspection of \cref{fig:skeleton-graph-sl3} shows that these account for all the red edges (similar computations can be performed starting from $\calB(\omega_2)$ to obtain all the blue edges).

We remark that we should not expect a similar phenomenon for general $\lieg$.
What is special about $\lieg = A_2$ (or more generally $A_r$) is that the fundamental crystals $\calB(\omega_1)$ and $\calB(\omega_2)$ are both \emph{minuscule}, meaning that all their elements are in the $W$-orbit of the highest weight.
\end{remark}

\section{Right ends and keys}
\label{sec:right-ends}

In this section we restrict to the Lie algebras $\lieg = A_r = \mathfrak{sl}_{r + 1}$, where it is well-known that crystal bases can be identified with appropriate semistandard Young tableaux.
The goal here is to connect various aspects of the construction of the higher-rank graph $\hrg$ with some well-known notions and operations in combinatorics related to such tableaux.

First we show how the Cartan braiding between two fundamental crystals can be computing using jeu de taquin slides on certain (skew) tableaux.
This gives a simple combinatorial procedure to compute the right ends of elements in a crystal $\calB(\lambda)$, which is shown to coincide with the (left) \emph{key} of these tableaux, a notion originally introduced by Lascoux and Schützenberger in \cite{keys}.
As a byproduct, we use known results on keys to show that the vertex set of $\hrg$ for $\lieg = A_r$ can be identified with the corresponding Weyl group $S_{r + 1}$.

\subsection{Tableaux in type $A$}

We assume familiarity with the basics of Young tableaux, for which we refer to the textbook \cite{fulton} for any unexplained definitions and results.
In the following we are going to refer to semistandard Young tableaux simply as \emph{tableaux}.

It is well known that the elements of connected crystals in type $A$ can be identified with such tableaux, a result originally shown in \cite[Theorem 3.4.2]{kashiwara-nakashima}. The precise correspondence is as follows.
Let $\lambda \in \dwei$ and write $\lambda = \omega_{i_1} + \cdots + \omega_{i_n}$ with $i_1 \leq \cdots \leq i_n$.
Then the connected crystal $\calB(\lambda)$ can be realized as the Cartan component of the tensor product $\calB(\omega_{i_1}) \otimes \cdots \otimes \calB(\omega_n)$.
In this way, an element $b \in \calB(\lambda)$ is mapped to a tableau $T$ via
\ytableausetup{boxsize = 1.4 em}
\begin{equation}
\label{eq:tableaux-identification}
b =
\ytableaushort{{t^1_1}, \vdots, {t^1_{i_1}}} \otimes \cdots \otimes \ytableaushort{{t^n_1}, \vdots, \vdots, {t^n_{i_n}}}
\ \mapsto \ 
T = \ytableaushort{{t^n_1} \cdots {t^1_1}, \vdots \cdots \vdots, \vdots \cdots {t^n_{i_1}}, \vdots \cdots, {t^n_{i_n}}} \ .
\end{equation}

Below we prove a variant of this result for the embedding of $\calB(\omega_i + \omega_j)$ into $\calB(\omega_i) \otimes \calB(\omega_j)$ with $i \geq j$. In this case, the elements can be identified with certain skew tableaux (a result surely known to experts, but for which we could not find an appropriate reference).

\begin{lemma}
\label{lem:Cartan-skew}
For $i \geq j$, the Cartan component of $\calB(\omega_i) \otimes \calB(\omega_j)$ can be identified with the set of skew tableaux with columns of lengths $(j, i)$ via
\[
\ytableaushort{{m_1}, \vdots, \vdots, {m_i}} \otimes \ytableaushort{{n_1}, \vdots, {n_j}}
\ \mapsto \
\ytableaushort{\none {m_1}, \none \vdots, {n_1} \vdots, \vdots \vdots, {n_j} {m_i}} \ .
\]
\end{lemma}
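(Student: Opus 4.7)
To characterize the Cartan component inside $\calB(\omega_i) \otimes \calB(\omega_j)$, introduce the subset $S$ of tensors $b \otimes b'$ with $b = (m_1, \ldots, m_i)^T$ and $b' = (n_1, \ldots, n_j)^T$ satisfying the row condition $n_k \leq m_{i-j+k}$ for all $k = 1, \ldots, j$. The goal is to show that $S$ coincides with the Cartan component of $\calB(\omega_i) \otimes \calB(\omega_j)$. The argument has three ingredients: $S$ contains the Cartan highest weight $b_{\omega_i} \otimes b_{\omega_j}$; $S$ is stable under every Kashiwara operator $\tilde{F}_p$ and $\tilde{E}_p$; and no other highest weight element of the tensor product lies in $S$. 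These combine to give that $S$ is a subcrystal whose unique source is the Cartan highest weight, hence $S = \calB(\omega_i + \omega_j)$.

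The first and third ingredients are direct verifications. For the first, $b_{\omega_i} = (1, \ldots, i)^T$ and $b_{\omega_j} = (1, \ldots, j)^T$ yield $n_k = k$ and $m_{i-j+k} = i-j+k$, and the condition $k \leq i-j+k$ is equivalent to $i \geq j$. For the third, use the standard type-$A$ decomposition $\calB(\omega_i) \otimes \calB(\omega_j) = \bigoplus_{k \geq 0} \calB(\omega_{i+k} + \omega_{j-k})$; the highest weight vector of $\calB(\omega_{i+k} + \omega_{j-k})$ inside the tensor product is $(1, \ldots, i)^T \otimes (1, \ldots, j-k, i+1, \ldots, i+k)^T$, and for $k \geq 1$ the entry $n_{j-k+1} = i+1$ violates the required inequality $n_{j-k+1} \leq m_{i-k+1} = i-k+1$, so only the Cartan highest weight survives.

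The main obstacle is the closure step. The case where $\tilde{F}_p$ acts on the first tensor factor is straightforward, since it only raises some $m_a = p$ to $m_a = p+1$, which either preserves or strengthens the relevant inequality. The subtle case is when $\tilde{F}_p$ acts on the second factor and changes $n_b = p$ into $p+1$, where the condition can only be threatened if $m_{i-j+b} = p$. The tensor product rule combined with $\tilde{F}_p b' \neq 0$ forces $\varphi_p(b) = 0$, which for a column means that $b$ either lacks $p$ or contains the adjacent pair $(p, p+1)$. In the former case $m_{i-j+b} \neq p$ and there is nothing to do; in the latter, writing $m_a = p$ and $m_{a+1} = p+1$ and identifying $a = i-j+b$ gives $m_{i-j+b+1} = p+1$, and the row inequality at index $b+1$ together with $n_{b+1} > n_b = p$ forces $n_{b+1} = p+1$. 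So $b'$ already contains $p+1$, whence $\tilde{F}_p b' = 0$ and the problematic transition cannot occur. The boundary case $b = j$ is excluded because it would require the non-existent entry $m_{i+1}$. The argument for $\tilde{E}_p$ is dual.

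Once these are established, $S$ is a subcrystal of $\calB(\omega_i) \otimes \calB(\omega_j)$ containing only the Cartan highest weight, and by connectedness of the Cartan component equals $\calB(\omega_i + \omega_j)$; this yields the stated bijection with semistandard skew tableaux of the given shape.
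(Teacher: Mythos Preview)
Your proof is correct and takes a genuinely different route from the paper. The paper argues at a higher level: it invokes the general fact that the set $\calB_{\lambda/\mu}$ of skew tableaux of a fixed shape carries a crystal structure via an admissible (column) reading, then appeals to the decomposition $\calB_{\lambda/\mu}\cong\bigoplus_\nu \calB_\nu^{\oplus c^{\nu}_{\lambda\mu}}$ and observes that for a two-column skew shape there is a unique Yamanouchi reading word, so the crystal is connected and hence equal to the Cartan component. Your argument, by contrast, is entirely hands-on: you characterise the semistandard skew condition by the inequalities $n_k\le m_{i-j+k}$, verify directly that this set is closed under the tensor-product Kashiwara operators, and locate the highest-weight vectors explicitly. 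Your approach is more elementary and self-contained (no Littlewood--Richardson coefficients, no Yamanouchi words), at the cost of a case analysis; the paper's is shorter once the cited machinery is granted.

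Two small comments. First, you use the symbol $b$ both for the first tensor factor and, later, for an index in ``$n_b=p$''; this clash is confusing and should be renamed. Second, your appeal to the decomposition $\calB(\omega_i)\otimes\calB(\omega_j)=\bigoplus_{k\ge 0}\calB(\omega_{i+k}+\omega_{j-k})$ in the ``third ingredient'' is not strictly needed: the direct analysis you sketch (that any highest-weight vector has first factor $b_{\omega_i}$, and that the admissible second factors are forced to be the columns $(1,\ldots,j{-}k,i{+}1,\ldots,i{+}k)^T$) already identifies the highest-weight vectors without citing the decomposition, which keeps the argument fully self-contained.
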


\begin{proof}
We make use of various results from \cite{bump-schilling}, but we should note that their conventions for the tensor product of crystals are opposite to ours.
Write $\calB_{\lambda / \mu}$ for the set of skew tableaux of skew shape $\lambda / \mu$. This set can be given the structure of an crystal by embedding it into a tensor power of $\calB(\omega_1)$ via an admissible reading, such as the \emph{column reading} (see \cite[Theorem 8.8]{bump-schilling} for the case of row reading, but the same is true for the column one).

The column reading maps a column with (strictly increasing) entries $i_1, \ldots, i_k$ to the element $\ytableaushort{{i_1}} \otimes \cdots \otimes \ytableaushort{{i_k}}$ of the tensor power $\calB(\omega_1)^{\otimes k}$ (corresponding to the fundamental crystal $\calB(\omega_k)$).
Next, consider a general skew shape $\lambda / \mu$, with columns $C_1, \ldots, C_n$ of lengths $(\ell_1, \ldots, \ell_n)$.
Then the column reading maps any tableau of this shape to $\calB(\omega_1)^{\otimes (\ell_1 + \cdots + \ell_n)}$ via
\[
\mathrm{CR}:
\ytableaushort{{C_1} \cdots {C_n}}
\mapsto 
\mathrm{CR}(\ytableaushort{{C_n}}) \otimes \cdots \otimes \mathrm{CR}(\ytableaushort{{C_1}}).
\]
Hence these tableaux can be seen as a subset of the tensor product $\calB(\omega_{\ell_n}) \otimes \cdots \otimes \calB(\omega_{\ell_1})$.

Applying this to our setting, we see that the skew tableaux with columns of lengths $(j, i)$ are contained in the tensor product $\calB(\omega_i) \otimes \calB(\omega_j)$.
Note that the highest weight element of the Cartan component is of this form, since the highest weight element of $\calB(\omega_k)$ is the column with entries $1, \ldots, k$.
To conclude the proof, we need to show that all such skew tableaux are obtained by the action of the Kashiwara operators on the highest weight element.

In general crystals corresponding to skew tableaux are not connected: according to \cite[Proposition 8.10]{bump-schilling} we have the decomposition $\calB_{\lambda / \mu} \cong \bigoplus_\nu \calB_\nu^{\oplus c_{\lambda \mu}^\nu}$, where the components $\calB_\nu$ are connected and the $c_{\lambda \mu}^\nu$ are the Littlewood-Richardson coefficients.
The latter can be characterized as the number of skew tableaux (with weight $\nu$) whose reading words are Yamanouchi words.
However, for the skew shape considered here, it is easy to see that there is a unique possible Yamanouchi word.
Then the result follows, as we have a connected crystal.
\end{proof}

\subsection{Right ends and jeu de taquin}

The jeu de taquin slides are certain moves which can be performed on a skew tableau to transform it into a tableau (the so-called \emph{rectification}), see for instance \cite[Section 1.2]{fulton}.
Keeping in mind the description from the previous subsection, we can realize the Cartan braiding as follows.

\begin{lemma}
\label{lem:cartan-slides}
Consider the Cartan braiding $\sigma: \calB(\omega_i) \otimes \calB(\omega_j) \to \calB(\omega_j) \otimes \calB(\omega_i)$ restricted to the the Cartan components of the two tensor products.
Then $\sigma(b \otimes b')$ can be obtained by applying jeu de taquin slides to the (skew) tableau corresponding to $b \otimes b'$.
\end{lemma}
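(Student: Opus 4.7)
The plan is to realize both sides of the claimed equality as crystal isomorphisms between two tableau models of the Cartan component $\calB(\omega_i + \omega_j)$, and then appeal to the rigidity of crystal morphisms. Without loss of generality I assume $i \geq j$. First, by \cref{lem:Cartan-skew}, the Cartan component of the source $\calB(\omega_i) \otimes \calB(\omega_j)$ is identified with the crystal of skew tableaux whose columns (from left to right) have lengths $(j, i)$, bottom-aligned. Second, by \eqref{eq:tableaux-identification} applied to $\lambda = \omega_j + \omega_i$, the Cartan component of the target $\calB(\omega_j) \otimes \calB(\omega_i)$ is identified with the crystal of straight tableaux whose columns have lengths $(i, j)$.

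Next I would invoke the classical fact that jeu de taquin rectification induces an isomorphism of crystals from any skew-tableau crystal onto the crystal of its rectified shape (this is part of the standard plactic-monoid/crystal dictionary in type $A$; see \cite{fulton} for the combinatorial side and \cite{bump-schilling} for the crystal-theoretic translation). Applied to our skew shape, whose unique rectification is the straight shape with columns $(i, j)$, this produces a crystal isomorphism $J$ between the two models identified in the previous step.

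On the other hand, the restriction of the Cartan braiding $\sigma$ to the Cartan components is, by its very definition, also a crystal isomorphism between these two models. The statement then reduces to the observation that any two crystal isomorphisms between connected semiregular crystals coincide as soon as they agree on the highest-weight element: indeed both must send the unique highest-weight element of the source to the unique highest-weight element of the target (by weight considerations, as each connected crystal admits exactly one such element), and crystal morphisms commute with the Kashiwara operators that generate the crystal from its highest-weight vector. A brief direct check shows that both $J$ and $\sigma$ send the highest-weight skew tableau (with left column $(1, \ldots, j)$ and right column $(1, \ldots, i)$) to the highest-weight straight tableau with columns $(1, \ldots, i)$ and $(1, \ldots, j)$; for $\sigma$ this is precisely \cref{lem:cartan-extremal} applied with $w = e$, while for $J$ it follows from running any valid sequence of slides on the highest-weight skew tableau.

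The main (and essentially only) obstacle is a bookkeeping point rather than a conceptual one: one must verify that the crystal structure on skew tableaux used in the cited jeu de taquin theorem matches the one induced by the column-reading embedding employed in the proof of \cref{lem:Cartan-skew}, so that the two models $J$ and $\sigma$ interpolate between are literally the same. Once this matching of conventions is made explicit, the argument above immediately concludes $\sigma = J$ on the Cartan components.
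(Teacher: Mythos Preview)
Your proposal is correct and follows essentially the same route as the paper: identify the two Cartan components with the skew and straight two-column tableau crystals, note that both the Cartan braiding and jeu de taquin are crystal isomorphisms between them, and conclude by uniqueness of such isomorphisms (equivalently, by propagating from the highest-weight element via the Kashiwara operators). The only difference is in the citation for the key input that jeu de taquin is compatible with the crystal structure: the paper invokes \cite[Theorem 3.3.1]{vanleeuwen} for the statement that the Kashiwara operators commute with jeu de taquin slides, whereas you appeal to the equivalent packaged statement that rectification is a crystal isomorphism via \cite{fulton} and \cite{bump-schilling}.
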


\begin{proof}
Recall that the Cartan braiding is the unique isomorphism of crystals between the two Cartan components (and zero otherwise).
As such, it sends the highest weight element of the first tensor product to the highest weight element of the second tensor product.
Taking into account \cref{lem:Cartan-skew}, it is immediate to see this mapping is realized by jeu de taquin slides, pictured below for $i \geq j$ (with the case $i \leq j$ corresponding to this move in reverse):
\[
\ytableaushort{\none 1, \none \vdots, 1 \vdots, \vdots \vdots, j i}
\quad \xrightarrow{\mathrm{jeu \ de \ taquin}} \quad 
\ytableaushort{1 1, \vdots \vdots, \vdots j, \vdots, i} \ .
\]
It is proven in \cite[Theorem 3.3.1]{vanleeuwen} that the action of the Kashiwara operators commutes with the jeu de taquin slides (note that the Kashiwara operators are called \emph{coplactic operations} in this reference).
The result then follows, since all other elements can be obtained from the highest weight elements under the action of the Kashiwara operators.
\end{proof}

\begin{remark}
\cref{lem:cartan-slides} can probably be deduced from \cite{lenart-commutor}, which gives a combinatorial realization of the crystal commutor in terms of certain local moves (which coincides with the Cartan braiding on the Cartan component, as in \cref{rmk:commutor-cartan}).
However, as this reference does not explicitly work with tableaux, we prefer to proceed directly as in the lemma.
\end{remark}

We illustrate this result in the case of our running example.

\ytableausetup{boxsize = 1.1 em}

\begin{example}
Consider $\lieg = A_2$. We retain the notation for the crystal bases of the two fundamental crystals from \cref{sec:example-a2}.
They correspond to the following tableaux:
\[
\begin{gathered}
a_1 = \ytableaushort{1} \ , \quad
a_2 = \ytableaushort{2} \ , \quad
a_3 = \ytableaushort{3} \ , \\
b_1 = \ytableaushort{1, 2} \ , \quad
b_2 = \ytableaushort{1, 3} \ , \quad
b_3 = \ytableaushort{2, 3} \ .
\end{gathered}
\]
Consider the Cartan braiding $\calB(\omega_1) \otimes \calB(\omega_2) \to \calB(\omega_2) \otimes \calB(\omega_1)$, restricted to the Cartan components.
The relevant jeu de taquin slides in this case are pictured in \cref{fig:JDT-A2}.

\begin{figure}[h]
\begin{center}

\begin{tikzpicture}
\node at (0, 0) {$\ytableaushort{1 1, 2} \ \mapsto \ \ytableaushort{\none 1, 1 2} \ ,$};
\node at (4, 0) {$\ytableaushort{1 2, 2} \ \mapsto \ \ytableaushort{\none 1, 2 2} \ ,$};
\node at (8, 0) {$\ytableaushort{1 3, 2} \ \mapsto \ \ytableaushort{\none 1, 2 3} \ ,$};
\node at (0, -1.5) {$\ytableaushort{1 1, 3} \ \mapsto \ \ytableaushort{\none 1, 1 3} \ ,$};
\node at (4, -1.5) {$\ytableaushort{1 2, 3} \ \mapsto \ \ytableaushort{\none 2, 1 3} \ ,$};
\node at (8, -1.5) {$\ytableaushort{1 3, 3} \ \mapsto \ \ytableaushort{\none 1, 3 3} \ ,$};
\node at (4, -3) {$\ytableaushort{2 2, 3} \ \mapsto \ \ytableaushort{\none 2, 2 3} \ ,$};
\node at (8, -3) {$\ytableaushort{2 3, 3} \ \mapsto \ \ytableaushort{\none 2, 3 3} \ .$};
\end{tikzpicture}
\end{center}

\caption{Jeu de taquin slides for the Cartan component of $\calB(\omega_1) \otimes \calB(\omega_2)$.}
\label{fig:JDT-A2}
\end{figure}

Translating this back into elements of the crystal bases, we obtain
\[
\begin{array}{lll}
\sigma(a_1 \otimes b_1) = b_1 \otimes a_1, &
\sigma(a_2 \otimes b_1) = b_1 \otimes a_2, &
\sigma(a_3 \otimes b_1) = b_2 \otimes a_2, \\
\sigma(a_1 \otimes b_2) = b_2 \otimes a_1, &
\sigma(a_2 \otimes b_2) = b_3 \otimes a_1, &
\sigma(a_3 \otimes b_2) = b_2 \otimes a_3, \\
\sigma(a_1 \otimes b_3) = 0, &
\sigma(a_2 \otimes b_3) = b_3 \otimes a_2, &
\sigma(a_3 \otimes b_3) = b_3 \otimes a_3.
\end{array}
\]
This reproduces the direct computation appearing in \cite[Example 5.7]{matassa-yuncken}.
\end{example}

Next, we show how to use \cref{lem:cartan-slides} to compute the right ends of $\calB(\rho)$.

\begin{proposition}
\label{prop:rightend-slides}
Let $\lambda \in \dwei$ and write $\lambda = \omega_{i_1} + \cdots + \omega_{i_n}$ with $i_1 \leq \cdots \leq i_n$.
Let $b \in \calB(\lambda)$ and identify this element with a tableau $T$ as in \eqref{eq:tableaux-identification}.
Then we can compute the right ends $\left( \Rend_{i_1}(b), \ldots, \Rend_{i_n}(b) \right)$ of $b$ using the corresponding tableau $T$ as follows:
\begin{itemize}
\item apply jeu de taquin slides to $T$ to bring the column of length $i_k$ to the leftmost position,
\item obtain the right end $\Rend_{i_k}(b)$ as the leftmost column of this skew tableau.
\end{itemize}
\end{proposition}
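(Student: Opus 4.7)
The plan is to combine \cref{prop:computation-rightend} with \cref{lem:cartan-slides}, passing through the dictionary between tensor-product positions and tableau columns. Recall that under the identification \eqref{eq:tableaux-identification} the tensor factor $b_j$ at position $j$ becomes the $(n-j+1)$-th column of $T$ from the left; in particular tensor position $k$ corresponds to tableau column $n-k+1$, so the rightmost tensor position is the leftmost tableau column.

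By \cref{prop:computation-rightend}, the right end $\Rend_{i_k}(b)$ is the rightmost tensor factor of $\sigma_{n-1} \cdots \sigma_k (b_1 \otimes \cdots \otimes b_n)$, with non-vanishing at every intermediate stage automatic from the assumption that $b$ lies in the Cartan component. I would then interpret each $\sigma_j$ via \cref{lem:cartan-slides}: since the Cartan braiding between two fundamental crystals coincides with a jeu de taquin slide on the corresponding two-column (skew) tableau, and since $\sigma_j$ acts as the identity on all other tensor factors, in the tableau picture it performs a local slide between the two adjacent columns at positions $n-j+1$ and $n-j$ from the left, while leaving the remaining columns of $T$ unchanged. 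Concretely, $\sigma_k$ moves the column of length $i_k$ from tableau position $n-k+1$ one step to the left; then $\sigma_{k+1}$ moves it one step further to the left, and so on, until $\sigma_{n-1}$ deposits it in the leftmost position. Translating back, the rightmost tensor factor of the final expression is exactly the leftmost column of the resulting skew tableau, which is therefore $\Rend_{i_k}(b)$.

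The main obstacle I anticipate is the passage from the "two columns in isolation" setting of \cref{lem:cartan-slides} to the "two adjacent columns inside a larger, possibly skew, tableau" that arises at each intermediate step: one must check that the local slide described by \cref{lem:cartan-slides} is truly local, so that the skew shapes built up by earlier slides on other pairs of columns do not interfere. This locality is built into the definition of $\sigma_j$, which by construction acts as the Cartan braiding on the two relevant factors and as the identity elsewhere; moreover, \cref{prop:computation-rightend} combined with the hypothesis on $b$ ensures that at every intermediate stage the pair of tensor factors under consideration sits in the Cartan component of its own tensor product, so that \cref{lem:cartan-slides} applies verbatim at each step.
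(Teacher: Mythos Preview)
Your proposal is correct and follows essentially the same approach as the paper: combine \cref{prop:computation-rightend} with \cref{lem:cartan-slides}, and use the order-reversing dictionary of \eqref{eq:tableaux-identification} to translate ``rightmost tensor factor'' into ``leftmost column.'' Your treatment is in fact more careful than the paper's, since you spell out the position correspondence explicitly and address the locality of the slides at intermediate stages, points the paper leaves implicit.
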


\begin{proof}
Write $b = b_1 \otimes \cdots \otimes b_n$ with $b_k \in \calB(\omega_{i_k})$.
According to \cref{prop:computation-rightend}, the right end $\Rend_{i_k}(b)$ for $k = 1, \ldots, n$ can be determined as follows: we use the Cartan braiding $\sigma$ to compute $\sigma_{r - 1} \cdots \sigma_k (b_1 \otimes \cdots \otimes b_r)$ and obtain the result as the rightmost tensor factor.
Each of the Cartan braidings $\calB(\omega_i) \otimes \calB(\omega_j) \to \calB(\omega_j) \otimes \calB(\omega_i)$ can be computed in terms of jeu de taquin slides between two-column (skew) tableaux, as shown in \cref{lem:cartan-slides}.
Since the correspondence reverses products (with the conventions we are using for the tensor product of crystals), we see that the rightmost tensor factor of $\sigma_{r - 1} \cdots \sigma_k (b_1 \otimes \cdots \otimes b_r)$ corresponds to the leftmost column of the skew tableau obtained by applying the jeu de taquin slides.
\end{proof}

\begin{remark}
In particular, this result can be used to compute the right ends of $\calB(\rho)$.
In this case we have $\rho = \omega_1 + \cdots + \omega_r$ and $\Rend(b) = \left( \Rend_1(b), \ldots, \Rend_r(b) \right)$.
\end{remark}

We illustrate this procedure in our main example.

\begin{example}
We show how to use \cref{prop:rightend-slides} to determine the right ends of $\calB(\rho)$ for $\lieg = A_2$.
We have already exhibited the relevant jeu de taquin slides in \cref{fig:JDT-A2}.
Then taking the leftmost columns appearing in the various (skew) tableaux we get the following:

\begin{figure}[h]
\begin{center}

\ytableausetup{boxsize = 1.1 em}
\begin{tikzpicture}[scale=0.8]
\node at (0, 0) {$\Rend \left( \ytableaushort{1 1, 2} \right) = \ \ytableaushort{1 1, 2} \ ,$};
\node at (5.5, 0) {$\Rend \left( \ytableaushort{1 2, 2} \right) = \ \ytableaushort{1 2, 2} \ ,$};
\node at (11, 0) {$\Rend \left( \ytableaushort{1 3, 2} \right) = \ \ytableaushort{1 2, 2} \ ,$};
\node at (0, -2) {$\Rend \left( \ytableaushort{1 1, 3} \right) = \ \ytableaushort{1 1, 3} \ ,$};
\node at (5.5, -2) {$\Rend \left( \ytableaushort{1 2, 3} \right) = \ \ytableaushort{1 1, 3} \ ,$};
\node at (11, -2) {$\Rend \left( \ytableaushort{1 3, 3} \right) = \ \ytableaushort{1 3, 3} \ ,$};
\node at (5.5, -4) {$\Rend \left( \ytableaushort{2 2, 3} \right) = \ \ytableaushort{2 2, 3} \ ,$};
\node at (11, -4) {$\Rend \left( \ytableaushort{2 3, 3} \right) = \ \ytableaushort{2 3, 3} \ .$};
\end{tikzpicture}
\end{center}

\caption{The right ends of the various elements of $\calB(\rho)$ for $\lieg = A_2$.}
\end{figure}

Rewriting this in terms of the previously introduced notation, we get
\[
\begin{array}{lll}
\Rend(a_1 \otimes b_1) = (a_1, b_1), &
\Rend(a_2 \otimes b_1) = (a_2, b_1), &
\Rend(a_3 \otimes b_1) = (a_2, b_1), \\
\Rend(a_1 \otimes b_2) = (a_1, b_2), &
\Rend(a_2 \otimes b_2) = (a_1, b_2), &
\Rend(a_3 \otimes b_2) = (a_3, b_2), \\
&
\Rend(a_2 \otimes b_3) = (a_2, b_3), &
\Rend(a_3 \otimes b_3) = (a_3, b_3).
\end{array}
\]
This reproduces the direct computation in \cite[Example 7.9]{matassa-yuncken}.
We note that the six distinct right ends can be identified with the Weyl group elements.
\end{example}

\subsection{Keys of tableaux}

The notion of key of a tableau was introduced by Lascoux and Schützenberger in \cite{keys} to analyze the combinatorics of standard bases of sections of line bundles (see also \cite[Appendix A.5]{fulton}).
We review this notion below.

A skew tableau is called \emph{frank} if the lengths of its columns are the same (up to a permutation) as the lengths of the columns of its rectification.
Let $T$ be a tableau with $n$ columns of lengths $(\ell_1, \ldots, \ell_n)$. For each column $1 \leq i \leq n$, choose a frank skew tableau $S_i$ such that its rightmost column has length $\ell_i$. We denote such a column by $R_i(T)$, since it turns out not to depend on the specific choice of $S_i$. Then the \emph{right key} $\rkey(T)$ of $T$ is the tableau of the same shape as $T$ and with columns given by $R_1(T), \ldots, R_n(T)$.

A similar definition is given for the \emph{left key} $\lkey(T)$.
We consider frank skew tableaux whose leftmost columns $L_i(T)$ have lengths $\ell_i$, and form the key by the columns $L_1(T), \ldots, L_n(T)$.

In practice, the keys can be computed by applying jeu de taquin slides to the tableau $T$ to produce the required skew tableaux, as illustrated in the next example.

\begin{example}
Let $T$ be the tableau appearing to the left of \cref{fig:example-keys}, having columns of lengths $(3, 2, 1)$.
We can use jeu de taquin slides to produce the (frank) skew tableaux with columns of the required lengths, either to the left or to the right, as depicted in the figure.

\begin{figure}[h]
\begin{center}

\begin{tikzpicture}[scale=0.8]
\node (t1) at (0, 0) {$T = \ytableaushort{1 2 3, 2 5, 4}$};

\node (t2) at (4, 2) {$\ytableaushort{\none 1 3, 2 2, 4 5}$};

\node (t3) at (8, 2) {$\ytableaushort{\none \none 1, \none \none 3, 2 2 5, 4}$};

\node (t4) at (12, 0) {$\ytableaushort{\none \none 1, \none 2 3, 2 4 5}$};

\node (t5) at (4, -2) {$\ytableaushort{\none \none 3, 1 2 5, 2, 4}$};

\node (t6) at (8, -2) {$\ytableaushort{\none 1 3, \none 2 5, 2 4}$};

\draw (t1) -- (t2);
\draw (t2) -- (t3);
\draw (t3) -- (t4);
\draw (t1) -- (t5);
\draw (t5) -- (t6);
\draw (t6) -- (t4);
\end{tikzpicture}
\end{center}

\caption{Jeu de taquin slides applied to the starting tableau $T$.}
\label{fig:example-keys}
\end{figure}

Then the moves from \cref{fig:example-keys} show that the left and right keys of $T$ are given by
\[
\lkey(T) = \ytableaushort{1 2 2, 2 4, 4} \ , \qquad
\rkey(T) = \ytableaushort{1 3 3, 3 5, 5} \ .
\]
\end{example}

The next result shows that, in type $A$, the notion of right end essentially coincides with that of left key (the left-right exchange being purely a matter of convention, see \cref{rmk:key-swap}).

\begin{theorem}
\label{thm:keys-ends}
Let $\lieg = A_r$.
Let $\lambda \in \dwei$ and write $\lambda = \omega_{i_1} + \cdots + \omega_{i_n}$ with $i_1 \leq \cdots \leq i_n$.
Let $b \in \calB(\lambda)$ and identify this element with a tableau $T$ as in \eqref{eq:tableaux-identification}.
Then the right ends $\left( \Rend_{i_1}(b), \ldots, \Rend_{i_n}(b) \right)$ can be identified with the columns of the left key $\lkey(T)$.
\end{theorem}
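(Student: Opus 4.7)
The plan is to invoke Proposition \ref{prop:rightend-slides} to express each right end as a specific column of a skew tableau, and then to recognize that skew tableau as one of the frank skew tableaux used in the definition of the left key. By Proposition \ref{prop:rightend-slides}, for each $k$ the right end $\Rend_{i_k}(b)$ is the leftmost column of a skew tableau $\widetilde{T}_k$, obtained from $T$ by applying jeu de taquin slides that move the column of length $i_k$ to the leftmost position. On the other hand, by definition, the column of $\lkey(T)$ of length $i_k$ equals the leftmost column of \emph{any} frank skew tableau whose rectification is $T$ and whose leftmost column has length $i_k$, and this does not depend on the choice of frank tableau. So the theorem reduces to verifying that each $\widetilde{T}_k$ is a frank skew tableau rectifying to $T$.

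To verify this, I would trace back through the construction in Proposition \ref{prop:rightend-slides}: the slides used to produce $\widetilde{T}_k$ are the elementary ones from Lemma \ref{lem:cartan-slides}, each of which swaps two adjacent columns of lengths $(j,i)$ into columns of lengths $(i,j)$. Two simple observations then suffice. First, each elementary slide merely permutes the multiset of column lengths, so after all the slides $\widetilde{T}_k$ has the same multiset of column lengths as $T$ and is therefore frank. Second, jeu de taquin slides preserve the rectification (a fundamental property due to Schützenberger), so $\widetilde{T}_k$ rectifies to $T$. Since $\widetilde{T}_k$ also has its leftmost column of length $i_k$ by construction, it qualifies as a frank skew tableau of the kind used to define the column of $\lkey(T)$ of length $i_k$.

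Combining these observations, the leftmost column of $\widetilde{T}_k$ equals the column of $\lkey(T)$ of length $i_k$ by well-definedness of the left key, and equals $\Rend_{i_k}(b)$ by Proposition \ref{prop:rightend-slides}, giving the desired identification. I do not expect any real obstacle here, since Proposition \ref{prop:rightend-slides} has already done the substantive combinatorial work; the only care required is bookkeeping the shift between indexing conventions (right ends labelled by weights $i_1 \leq \cdots \leq i_n$, versus key columns read left to right with weakly decreasing lengths $i_n \geq \cdots \geq i_1$).
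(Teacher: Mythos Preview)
Your proposal is correct and follows essentially the same route as the paper's own proof: both use Proposition~\ref{prop:rightend-slides} (built on Lemma~\ref{lem:cartan-slides}) to realize each right end as the leftmost column of a skew tableau obtained from $T$ by adjacent-column jeu de taquin slides, and then identify that tableau as a frank skew tableau computing the corresponding column of $\lkey(T)$. The only difference is cosmetic: the paper cites \cite[Appendix A.5]{fulton} for the fact that such slides yield frank tableaux, whereas you spell out directly that the elementary swaps permute the multiset of column lengths and preserve the rectification.
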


\begin{proof}
By definition of the left key $\lkey(T)$, its columns are given by the leftmost columns $L_k(T)$ of the appropriate frank skew tableaux $S_k'$ corresponding to the tableau $T$.
As shown in \cite[Appendix A.5]{fulton}), such frank skew tableaux can be obtained by applying jeu de taquin slides to adjacent columns of $T$.
By \cref{lem:cartan-slides} we know that any such move between adjacent columns corresponds to the action of the Cartan braiding on a tensor product.
Recall that the correspondence between tableaux and Cartan components used in \cref{prop:rightend-slides} reverses the order of products.
Then we see that the leftmost column $L_k(T)$ of the skew tableau $S_k'$ corresponds to the rightmost factor of the corresponding tensor product.
By \cref{prop:rightend-slides} this corresponds to the right end $\Rend_{i_k}(b)$, which proves the result.
\end{proof}

\begin{remark}
\label{rmk:key-swap}
The right-to-left exchange in \cref{thm:keys-ends} is entirely due to the reversal of products in the correspondence \eqref{eq:tableaux-identification}.
Note that we could have chosen to work with the opposite convention for the tensor product of crystals, as done in \cite{bump-schilling} (which is better adapted to describe classical results in combinatorics).
With this different convention, the right ends would correspond to the right keys, which is a bit more natural.
\end{remark}

An an easy corollary of this identification, we obtain the following result.

\begin{corollary}
The vertex set of the higher-rank graph $\hrg$ for $\lieg = A_r$ can be identified with the corresponding Weyl group $W = S_{r + 1}$.
\end{corollary}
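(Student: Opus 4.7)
The plan is to combine \cref{thm:keys-ends} with a count of key tableaux of staircase shape, concluding via the embedding from \cref{prop:weyl-vertex}. For $\lieg = A_r$, the regular weight $\rho = \omega_1 + \cdots + \omega_r$ corresponds to the staircase shape $(r, r - 1, \ldots, 1)$, and the tableaux in $\calB(\rho)$ are the semistandard Young tableaux of this shape with entries in $\{1, \ldots, r + 1\}$. By \cref{thm:keys-ends}, the right end map $\Rend$ on $\calB(\rho)$ records exactly the columns of the left key $\lkey(T)$ of the associated tableau $T$.

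Next I would pin down the image of $\Rend$. Two standard properties of the left key (see \cite[Appendix A.5]{fulton}) are: (i) $\lkey(T)$ is always a \emph{key tableau}, i.e.\ its column sets form a nested chain $C_1 \supseteq C_2 \supseteq \cdots \supseteq C_r$ of subsets of $\{1, \ldots, r + 1\}$; and (ii) $\lkey$ restricts to the identity on key tableaux. Together these imply that the image of $\Rend$ is in bijection with the set $\mathcal{K}$ of key tableaux of shape $\rho$, and hence so is the vertex set of $\hrg$. A direct count then shows $|\mathcal{K}| = (r + 1)!$: a key tableau of shape $(r, r - 1, \ldots, 1)$ is determined by its chain $C_1 \supset C_2 \supset \cdots \supset C_r$ with $|C_i| = r + 1 - i$, and such a chain is equivalent to a linear ordering of $\{1, \ldots, r + 1\}$ (read off the unique element missing from $C_1$, then the element of $C_i \setminus C_{i + 1}$ for $i = 1, \ldots, r - 1$, and finally the sole element of $C_r$).

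To finish, I would invoke \cref{prop:weyl-vertex}, which supplies an injection $W = S_{r + 1} \hookrightarrow (\text{vertices of } \hrg)$, $w \mapsto \Rend(b_{w \rho})$. Since both sides have the same finite cardinality $(r + 1)!$, this injection is forced to be a bijection, giving the claimed identification. The only care needed is in matching conventions — specifically, that the identification \eqref{eq:tableaux-identification} used in \cref{thm:keys-ends} is consistent with the tensor-product description of $\calB(\rho)$ underlying the vertex set of $\hrg$. Both realize $\calB(\rho)$ as the Cartan component of $\calB(\omega_1) \otimes \cdots \otimes \calB(\omega_r)$, so this amounts to bookkeeping and I anticipate no serious obstacle.
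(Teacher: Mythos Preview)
Your proof is correct and follows essentially the same approach as the paper: both combine \cref{thm:keys-ends} with the embedding of \cref{prop:weyl-vertex} and finish by a cardinality/sandwich argument. The only minor difference is that the paper cites the known fact that the image of the key map sits inside $S_{r+1}$, whereas you instead characterize the image explicitly as the set of key tableaux of staircase shape and count them directly to get $(r+1)!$ --- a slightly more self-contained variant of the same idea.
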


\begin{proof}
It is well-known that the image of the (left or right) key map on tableaux with entries in $\{ 1, \cdots, n \}$ can be seen as a subset of the symmetric group $S_n$ (for instance, in \cite[Section 2.6]{hersh-lenart} a generalization of the key map is defined directly with values in the Weyl group).
\cref{thm:keys-ends} allows us to compute the right ends of $\calB(\rho)$ for $\lieg = A_r$ in terms of the left key map. Hence the vertex set of $\hrg$ can be identified with a subset of the Weyl group $W = S_{r + 1}$.
But the latter embeds in the vertex set of $\hrg$ by \cref{prop:weyl-vertex}, which shows equality.
\end{proof}

It should also be possible to interpret the paths in the higher-rank graph $\hrg$ in a similar way, by considering products of key tableaux and general tableaux as in \cite[Lemma 3]{fulton-pieri}.
However we are not going to investigate this connection here.

\section{Keys beyond type $A$}
\label{sec:beyond-A}

The notion of key can be generalized to a map $\kappa: \calB \to W$, where $\calB$ is a crystal of an arbitrary symmetrizable Kac-Moody algebra $\lieg$ and $W$ its corresponding Weyl group, see for instance \cite{hersh-lenart} and references therein.
This generalization is based on the notion of \emph{Lakshmibai--Seshadri paths} introduced by Littelmann in \cite{littelmann}: the right and left keys can be defined using such paths in terms of their initial and final directions, respectively.
An algorithm to compute the (right) key, without reference to the path model, is given \cite[Theorem 3.4]{hersh-lenart}.
It should be mentioned that other methods are available to compute such keys, but the one the cited paper appears more closely related to the setting considered here.

It is known that such (generalized) keys can be identified with the keys of Lascoux and Schützenberger for $\lieg$ of type $A$.
Then it follows from \cref{thm:keys-ends} that, in this case, the right ends can also be identified with such keys.
On the other hand, it is not clear how one should compare the two notions beyond type $A$.
The two notions are certainly distinct in general, since the key as defined above takes value in the Weyl group $W$, while the right ends do not necessarily correspond to Weyl group elements (see for instance the example below).

The rest of this section is dedicated to the computation of the right ends and the keys corresponding to the crystal $\calB(\rho)$ for the Lie algebra $\lieg = C_2$.
This is simply meant as an invitation for further study on the connection between these two notions.

\emph{Warning}: in the example below we switch to the \emph{opposite} conventions for the tensor product of crystals, since we want to use the rules given in \cite[Theorem 3.4]{hersh-lenart}, which allow the computation of the right keys.
We note that, with these opposite conventions for the tensor product, \cref{thm:keys-ends} gives an identification between right ends and right keys instead.

\begin{example}
We consider $\lieg = C_2$, whose right ends have been computed in \cite[Example 10.6]{matassa-yuncken} with different conventions.
The two fundamental crystals are
\[
\begin{split}
\calB(\omega_1) : \quad & a_1 \xrightarrow{1} a_2 \xrightarrow{2} a_3 \xrightarrow{1} a_4, \\
\calB(\omega_2) : \quad & b_1 \xrightarrow{2} b_2 \xrightarrow{1} b_3 \xrightarrow{1} b_4 \xrightarrow{2} b_5.
\end{split}
\]
The crystal $\calB(\rho)$ can be identified with the Cartan component of $\calB(\omega_1) \otimes \calB(\omega_2)$.
The crystal graph corresponding to this component is given by
\begin{center}
\begin{tikzcd}[column sep=1em, row sep=1em]
a_1 \otimes b_1 \arrow[r, blue, "2"] \arrow[d, red, "1"] & a_1 \otimes b_2 \arrow[r, red, "1"] & a_1 \otimes b_3 \arrow[r, red, "1"] & a_1 \otimes b_4 \arrow[r, blue, "2"] \arrow[d, red, "1"] & a_1 \otimes b_5 \arrow[d, red, "1"] \\
a_2 \otimes b_1 \arrow[r, blue, "2"] & a_2 \otimes b_2 \arrow[r, red, "1"] \arrow[d, blue, "2"] & a_2 \otimes b_3 \arrow[d, blue, "2"] & a_2 \otimes b_4 \arrow[r, blue, "2"] & a_2 \otimes b_5 \arrow[d, blue, "2"] \\
& a_3 \otimes b_2 \arrow[r, red, "1"] & a_3 \otimes b_3 \arrow[r, red, "1"] & a_3 \otimes b_4 \arrow[d, red, "1"] & a_3 \otimes b_5 \arrow[d, red, "1"] \\
& & & a_4 \otimes b_4 \arrow[r, blue, "2"] & a_4 \otimes b_5
\end{tikzcd}
\end{center}
(Note that this corresponds to the opposite of the graph for $\calB(\omega_2) \otimes \calB(\omega_1)$ from \cite[Example 10.6]{matassa-yuncken}, due to the conventions used here).
The Cartan braiding is given by
\begin{small}
\begin{align*}
\sigma(a_1 \otimes b_1) & = b_1 \otimes a_1, &
\sigma(a_1 \otimes b_2) & = b_2 \otimes a_1, &
\sigma(a_1 \otimes b_3) & = b_2 \otimes a_2, &
\sigma(a_1 \otimes b_4) & = b_3 \otimes a_2, \\
\sigma(a_1 \otimes b_5) & = b_3 \otimes a_3, &
\sigma(a_2 \otimes b_1) & = b_1 \otimes a_2, &
\sigma(a_2 \otimes b_2) & = b_1 \otimes a_3, &
\sigma(a_2 \otimes b_3) & = b_1 \otimes a_4, \\
\sigma(a_2 \otimes b_4) & = b_4 \otimes a_2, &
\sigma(a_2 \otimes b_5) & = b_4 \otimes a_3, &
\sigma(a_3 \otimes b_2) & = b_2 \otimes a_3, &
\sigma(a_3 \otimes b_3) & = b_2 \otimes a_4, \\
\sigma(a_3 \otimes b_4) & = b_3 \otimes a_4, &
\sigma(a_3 \otimes b_5) & = b_5 \otimes a_3, &
\sigma(a_4 \otimes b_4) & = b_4 \otimes a_4, &
\sigma(a_4 \otimes b_5) & = b_5 \otimes a_4.
\end{align*}
\end{small}
Using this data, we can easily compute the right ends for the crystal $\calB(\rho)$:
\begin{center}
\begin{tikzcd}[column sep=0.1em, row sep=0.1em]
(a_1, b_1) & (a_1, b_2) & (a_2, b_3) & (a_2, b_4) & (a_3, b_5) \\
(a_2, b_1) & (a_3, b_2) & (a_4, b_3) & (a_2, b_4) & (a_3, b_5) \\
& (a_3, b_2) & (a_4, b_3) & (a_4, b_4) & (a_3, b_5) \\
& & & (a_4, b_4) & (a_4, b_5)
\end{tikzcd}
\end{center}
These are arranged in the same pattern as the crystal graph of $\calB(\rho)$ given above.
Some of these right ends can be put in correspondence with the Weyl group elements of $C_2$, as per the general \cref{prop:weyl-vertex}.
We list these (with the same pattern) in \cref{fig:weyl-C2}, and write $*$ for those vertices which do not corresponding to Weyl group elements.

\begin{figure}[h]
\begin{center}

\begin{tikzcd}[column sep=0.1em, row sep=0.1em]
1 & s_2 & * & s_1 s_2 & s_2 s_1 s_2 \\
s_1 & s_2 s_1 & * & s_1 s_2 & s_2 s_1 s_2 \\
& s_2 s_1 & * & s_1 s_2 s_1 & s_2 s_1 s_2 \\
& & & s_1 s_2 s_1 & s_1 s_2 s_1 s_2
\end{tikzcd}
\end{center}

\caption{The Weyl group elements corresponding to the right ends of $\calB(\rho)$.}
\label{fig:weyl-C2}
\end{figure}

On the other hand, the right keys for the crystal $\calB(\rho)$ can be computed in terms of the rules given in \cite[Theorem 3.4]{hersh-lenart}.
We refer to the cited paper for the precise statement of these rules, and simply report the result of the computation in \cref{fig:keys-C2}.

\begin{figure}[h]
\begin{center}

\begin{tikzcd}[column sep=0.1em, row sep=0.1em]
1 & s_2 & s_1 s_2 & s_1 s_2 & s_2 s_1 s_2 \\
s_1 & s_2 s_1 & s_1 s_2 s_1 & s_1 s_2 & s_2 s_1 s_2 \\
& s_2 s_1 & s_1 s_2 s_1 & s_1 s_2 s_1 & s_2 s_1 s_2 \\
& & & s_1 s_2 s_1 & s_1 s_2 s_1 s_2
\end{tikzcd}
\end{center}

\caption{The right keys for the crystal $\calB(\rho)$.}
\label{fig:keys-C2}
\end{figure}

Comparing \cref{fig:weyl-C2} and \cref{fig:keys-C2}, we see that they coincide for all columns except one, which is precisely where the right ends cannot be identified with any Weyl group elements.
Summarizing, in the example of $\lieg = C_2$ we have that the right ends and the right keys of crystal elements coincide, whenever the former can be identified with Weyl group elements.
\end{example}

The example discussed above motivates the following question.

\begin{question}
Consider the crystal $\calB(\rho)$ corresponding to a finite-dimensional semisimple Lie algebra $\lieg$.
Suppose that the right end of $b \in \calB(\rho)$ corresponds to a Weyl group element $w \in W$. Do we have $\kappa(b) = w$ for the right key?
What about for a general crystal $\calB(\lambda)$?
\end{question}

\end{document}